\numberwithin{equation}{section}
\newtheorem{theorem}{Theorem}[section]
\newtheorem{proposition}[theorem]{Proposition}
\newtheorem{lemma}[theorem]{Lemma}
\newtheorem{corollary}[theorem]{Corollary}
\theoremstyle{definition}
\newtheorem{example}[theorem]{Example}
\theoremstyle{remark}
\newcommand{\Z}{\mathbb{Z}}
\newcommand{\R}{\mathbb{R}}
\newcommand{\G}{\mathcal{G}}
\newcommand{\map}{\operatorname{map}}
\newcommand{\Ker}{\operatorname{Ker}}
\newcommand{\xMapsto}[2][]{\ext@arrow 0599{\Mapstofill@}{#1}{#2}}
\def\Mapstofill@{\arrowfill@{\Mapstochar\Relbar}\Relbar\Rightarrow}
\title{Homotopy types of gauge groups over Riemann surfaces}
\author{Masaki Kameko}
\address{Department of Mathematical Sciences, Shibaura Institute of Technology, 307 Minuma-ku Fukasaku, Saitama-City 337-8570, Japan}
\email{kameko@shibaura-it.ac.jp}
\author{Daisuke Kishimoto}
\address{Department of Mathematics, Kyoto University, Kyoto, 606-8502, Japan}
\email{kishi@math.kyoto-u.ac.jp}
\author{Masahiro Takeda}
\address{Department of Mathematics, Kyoto University, Kyoto, 606-8502, Japan}
\email{takeda.masahiro.87u@st.kyoto-u.ac.jp}
\subjclass[2010]{57S05, 55Q15}
\keywords{gauge group, Riemann surface, stable vector bundle, Samelson product}
\begin{document}

  \maketitle

  \baselineskip.525cm


  \parskip .05in
  \parindent .0pt


  \begin{abstract}
    Let $G$ be a compact connected Lie group with $\pi_1(G)\cong\Z$. We study the homotopy types of gauge groups of principal $G$-bundles over Riemann surfaces. This can be applied to an explicit computation of the homotopy groups of the moduli spaces of stable vector bundles over Riemann surfaces.
  \end{abstract}

  \baselineskip.525cm


  \section{Introduction}\label{Introduction}

  Let $G$ be a compact connected Lie group, and let $P$ be a principal $G$-bundle over a finite complex $X$. The \emph{gauge group} of $P$ is defined to be the topological group of $G$-equivariant self-maps of $P$ which fix $X$. There may be infinitely many distinct principal $G$-bundles over $X$. For example, there are infinitely many bundles when $X$ is an orientable 4-manifold. Each bundle has a gauge group, so there may be potentially certainly infinitely many gauge groups. However, Crabb and Sutherland \cite{CS} showed that these gauge groups have only finitely many homotopy types. Then it has been intensely studied the precise number of homotopy types of gauge groups for specific $G$ and $X$. The study began with simply-connected Lie groups \cite{C,HK,HKST,KK,KKT,KTT,Ko1,T4,T5}, and recently, non-simply-connected cases are also studied as in \cite{HKKS,KKKT,KMST,R}.

  In this paper, we study the homotopy types of gauge groups of principal $G$-bundles over a compact connected Riemann surface, where $\pi_1(G)\cong\Z$. This includes an important case, gauge groups of principal $U(n)$-bundles over a Riemann surface, whose topology was first studied by Atiyah and Bott \cite{AB}. To state the results, we need to define an intrinsic structure of $G$. Suppose $\pi_1(G)\cong\Z$. Then there are a compact connected simply-connected Lie group $H$ and a subgroup $C$ of the center of $S^1\times H$ such that
  \begin{equation}
    \label{decomposition G}
    G\cong(S^1\times H)/C
  \end{equation}
  Note that $H$ is uniquely determined by $G$, but $C$ is not. For example, if $G=S^1\times H$, then $C$ can be any finite subgroup of $S^1\times 1\subset S^1\times H$. We define
  \[
    s(G)=|p_2(C)|
  \]
  where $p_2\colon S^1\times H\to H$. By Theorem \ref{Samelson product} below, we can see that $s(G)$ is independent from the choice of $C$.

  \begin{example}
    Since $U(n)$ is the quotient of $S^1\times SU(n)$ by the diagonal central subgroup isomorphic to $\Z/n$, we have $s(U(n))=n$.
  \end{example}

  Let $X$ be a compact connected Riemann surface. Then there is a one-to-one correspondence between principal $G$-bundles over $X$ and $\pi_2(BG)\cong\Z$. Let $\G_k(X,G)$ denote the gauge group of a principal $G$-bundle over $X$ corresponding to $k\in\Z$. Now we state our results.

  \begin{theorem}
    \label{main 1}
    Let $G$ be a compact connected Lie group with $\pi_1(G)\cong\Z$, and let $X$ be a compact connected Riemann surface. If $(k,s(G))=(l,s(G))$, then $\G_k(X,G)$ and $\G_l(X,G)$ are homotopy equivalent after localizing at any prime or zero.
  \end{theorem}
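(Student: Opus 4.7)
My plan is to use the identification $B\G_k(X, G) \simeq \map_k(X, BG)$ and to study the evaluation fibration
$$\map_k^*(X, BG) \to \map_k(X, BG) \xrightarrow{\text{ev}} BG.$$
Looping this once gives a fiber sequence
$$\G_k(X, G) \to G \xrightarrow{\partial_k} \map_k^*(X, BG),$$
and since the components $\map_k^*(X, BG)$ are pairwise homotopy equivalent (as components of the total space $\map^*(X, BG)$), the homotopy type of $\G_k(X,G)$ is controlled entirely by the homotopy class $[\partial_k] \in [G, \map_0^*(X, BG)]$.

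Using the cell decomposition $X \simeq (\bigvee^{2g} S^1) \cup_w e^2$ of a genus-$g$ Riemann surface (where $w$ is the product of commutators $\prod [a_i, b_i]$), I obtain a fiber sequence
$$\Omega G \to \map^*(X, BG) \to \mathrm{hfib}(w^*)$$
with $w^* \colon G^{2g} \to G$. Analyzing $\partial_k$ via this decomposition should give the key identity $[\partial_k] = k[\partial_1]$ in the abelian group $[G, \map_0^*(X, BG)]$, and identify $[\partial_1]$ with (the adjoint of) the Samelson product $\langle \epsilon, 1_G \rangle$, where $\epsilon$ generates $\pi_1(G) \cong \Z$. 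By Theorem \ref{Samelson product} this Samelson product has order $s(G)$, so $[\partial_1]$ has order dividing $s(G)$.

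Under the hypothesis $(k, s(G)) = (l, s(G))$, I may choose $u \in \Z$ with $\gcd(u, s(G)) = 1$ and $uk \equiv l \pmod{s(G)}$, so that $u[\partial_k] = [\partial_l]$. For a prime $p \nmid s(G)$, and rationally, $[\partial_1]$ is $p$-locally null, so $\partial_k$ and $\partial_l$ are both $p$-locally null and taking homotopy fibers yields $\G_k(X,G) \simeq_{(p)} \G_l(X,G)$. For primes $p \mid s(G)$ the integer $u$ is a $p$-local unit; the $u$-th power map on the $H$-space $\Omega G$ extends through the fibration above to a $p$-local self-equivalence of $\map_0^*(X, BG)$ sending $[\partial_k]$ to $[\partial_l]$, and comparing homotopy fibers again gives $\G_k(X,G) \simeq_{(p)} \G_l(X,G)$.

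The delicate point is the explicit identification of $[\partial_1]$ with $\langle \epsilon, 1_G \rangle$ and the verification that this single Samelson product is the only contribution to the $k$-dependence, a feature enabled by $\dim X = 2$. The second subtle step is constructing the required $p$-local self-equivalence of $\map_0^*(X, BG)$ for $p \mid s(G)$: the $u$-th power map on $\Omega G$ must descend compatibly through the commutator attaching word $w$, and confirming this compatibility is where the decomposition $G \cong (S^1 \times H)/C$ combined with Theorem \ref{Samelson product} does the essential work.
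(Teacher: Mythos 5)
Your overall strategy is the paper's: identify $\G_k(X,G)$ with the homotopy fiber of the connecting map $\partial_k$ of the evaluation fibration \eqref{evaluation fibration}, recognize the adjoint of $\partial_k$ as $k\langle\epsilon,1_G\rangle$ (Lemma~\ref{connecting map}), invoke Theorem~\ref{Samelson product} for the order $s(G)$, and finish by a ``multiply by a $p$-local unit $u$'' argument. However, there are two genuine gaps, and both sit exactly where you flag the ``subtle steps.'' First, the identity $[\partial_k]=k[\partial_1]$ is asserted to hold ``in the abelian group $[G,\map_*(X,BG;0)]$,'' but this set is not a group: for a surface of positive genus $X$ is not a co-H-space (the cup product $H^1\otimes H^1\to H^2$ is nontrivial) and $BG$ is not an H-space, so $\map_*(X,BG;0)$ carries no natural multiplication. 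The most one gets directly over $X$ is an action of the genuine group $[G,\Omega_0G]$ via the coaction $X\to X\vee S^2$, and turning that action into a comparison of homotopy fibers is real work. (Relatedly, components of a mapping space are not ``pairwise homotopy equivalent'' merely by virtue of lying in one total space; here one again needs the coaction.) Second, the claimed $p$-local self-equivalence of $\map_*(X,BG;0)$ for $p\mid s(G)$, obtained by extending the $u$-th power map of $\Omega G$ ``through the fibration,'' is not constructed: the $u$-th power maps of $G^{2g}$ and $G$ are not homomorphisms, so they do not commute with the commutator attaching word $w^*$ on the nose, and this compatibility is precisely the hard content.

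The paper sidesteps both issues by reordering the argument: it first applies Theriault's splitting $\G_k(X,G)\simeq(\Omega G)^{2g}\times\G_k(S^2,G)$ (Proposition~\ref{homotopy decomposition}), which disposes of the genus and the attaching word once and for all, and only then runs your unit argument for the $S^2$-gauge group. Over $S^2$ the target $\map_*(S^2,BG;k)\simeq\Omega_0G$ \emph{is} an H-space (indeed a loop space), so $[G,\Omega_0G]$ is an honest abelian group in which $[\partial_k]=k[\partial_1]$ holds, and Lemma~\ref{homotopy fiber} is exactly the statement that a $p$-local unit multiple of a finite-order map into an H-space has the same homotopy fiber. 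If you replace your direct analysis over $X$ by this decomposition, the rest of your outline goes through verbatim.
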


  We remark that the $p$-localization of a disconnected space will mean the disjoint union of the $p$-localization of path-connected components. For a prime $p$, Theriault \cite{T3} gave a $p$-local homotopy decomposition of $\G_k(X,U(p))$, which implies the converse implication of Theorem \ref{main 1} holds for $G=U(p)$. We will prove the converse implication of Theorem \ref{main 1} holds for other Lie groups.

  \begin{theorem}
    \label{main 2}
    Let $G$ be a compact connected Lie group with $\pi_1(G)\cong\Z$, and let $X$ be a compact connected Riemann surface. If $G$ is locally isomorphic to $S^1\times SU(n)^r$ or $S^1\times SU(4n-2)^s\times Sp(2n-1)^t$, then the following statements are equivalent:
    \begin{enumerate}
      \item $(k,s(G))=(l,s(G))$;

      \item $\G_k(X,G)$ and $\G_l(X,G)$ are homotopy equivalent after localizing at any prime or zero.
    \end{enumerate}
  \end{theorem}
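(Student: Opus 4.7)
The implication $(1)\Rightarrow(2)$ is Theorem~\ref{main 1}, so the task is the converse. The plan is to produce, for each prime $p$ dividing $s(G)$, a numerical invariant of the $p$-local homotopy type of $\mathcal{G}_k(X,G)$ equal to $v_p\bigl(s(G)/\gcd(k,s(G))\bigr)$. A $p$-local homotopy equivalence preserves each such invariant, and collecting the equalities over all $p\mid s(G)$ forces $\gcd(k,s(G))=\gcd(l,s(G))$, that is, $(k,s(G))=(l,s(G))$.

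To build such an invariant I first pass to mapping spaces. Using $B\mathcal{G}_k(X,G)\simeq\operatorname{map}_k(X,BG)$ and the evaluation fibration over $BG$, the homotopy type of $\mathcal{G}_k(X,G)$ is controlled by that of $\operatorname{map}_k^*(X,BG)$. For a Riemann surface of genus $g$, the cofibration $S^1\to\bigvee_{2g}S^1\to X$ provided by the attaching map of the top cell yields a fiber sequence
\[
\operatorname{map}_k^*(X,BG)\to G^{2g}\xrightarrow{c_k}G,
\]
whose classifying map $c_k$ combines a product of iterated commutators of coordinate loops with an offset determined by $k\in\pi_1(G)$. Next I localize at a prime $p\mid s(G)$ and use the hypothesis on $H$ to split off a single summand of $G_{(p)}$ in which the invariant is visible: when $H$ is locally $SU(n)^r$, a mod-$p$ decomposition of $SU(n)$ produces a spherical factor of the right dimension; when $H$ is locally $SU(4n-2)^s\times Sp(2n-1)^t$, one additionally uses the classical $p$-local summand common to $Sp(2n-1)$ and $SU(4n-2)$. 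In either case the problem reduces to computing the order in $\pi_{j+1}(G)_{(p)}$ of the Samelson product $k\cdot\langle\epsilon,\alpha\rangle$, where $\epsilon$ generates $\pi_1(G)\cong\mathbb{Z}$ and $\alpha$ generates the selected $p$-local summand. By Theorem~\ref{Samelson product} the Samelson product $\langle\epsilon,\alpha\rangle$ has order $s(G)$, so $k\cdot\langle\epsilon,\alpha\rangle$ has order $s(G)/\gcd(k,s(G))$ as required.

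The main obstacle is the second case, $SU(4n-2)^s\times Sp(2n-1)^t$, since $Sp(2n-1)$ is not $p$-locally equivalent to $SU(4n-2)$ at all primes and one cannot directly copy the $SU$-factor argument. Instead one must pin down, at each relevant $p$, the precise retract of $Sp(2n-1)$ that still carries a cell in the critical dimension and verify that the Samelson product survives this retraction with its full order intact; this in turn requires confirming that the central subgroup $C$ of $S^1\times H$ projects non-trivially onto the chosen summand. The remaining combinatorial bookkeeping of iterated commutators inside $c_k$ is routine once this step is in place.
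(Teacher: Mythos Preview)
The proposal has a genuine gap at the key step. You invoke Theorem~\ref{Samelson product} to conclude that $\langle\epsilon,\alpha\rangle$ has order $s(G)$ for your chosen generator $\alpha$, but that theorem concerns $\langle\epsilon,1_G\rangle\in[S^1\wedge G,G]$, not $\langle\epsilon,\alpha\rangle\in\pi_{j+1}(G)$; precomposition with $1\wedge\alpha$ only shows that the order of $\langle\epsilon,\alpha\rangle$ \emph{divides} $s(G)$. In the case $H=SU(n)^r$ with $r>1$, the proof of Proposition~\ref{SU(n)} shows that for $\alpha_i$ generating the $i$-th copy of $\pi_{2n-1}(SU(n))$ one has $|\langle\epsilon,\alpha_i\rangle|=|q_i(C)|$, and these can be proper divisors of $s(G)$. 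So the number your construction actually produces is $v_p\bigl(|q_i(C)|/\gcd(k,|q_i(C)|)\bigr)$ for some $i$, and to get $v_p(s(G))$ you would have to pick $i$ with $v_p(|q_i(C)|)=v_p(s(G))$. But then you must explain why that choice is visible in the $p$-local \emph{homotopy type} of $\G_k(X,G)$ rather than in the auxiliary data $(H,C)$; your text never says what the invariant of $\G_k(X,G)_{(p)}$ actually is. A related problem: the fibre sequence $\map_*(X,BG)\to G^{2g}\xrightarrow{c}G$ coming from the attaching word $\prod_i[a_i,b_i]$ is \emph{independent of $k$}; the parameter $k$ only selects a component of the fibre, and there is no ``offset'' in $c$. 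So as written you have neither a well-defined invariant nor a correct identification of its value.

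The paper sidesteps all of this by computing integrally rather than prime by prime and never appealing to Theorem~\ref{Samelson product}. It first uses Proposition~\ref{homotopy decomposition} to reduce to $\G_k(S^2,G)$, then reads off a single homotopy group from the evaluation fibration: by Bott's calculation inside the proof of Proposition~\ref{SU(n)}, the image of $(\partial_k)_*\colon\pi_{2n-1}(G)\to\pi_{2n-1}(\Omega_0G)\cong(\Z/n!)^r$ is $\prod_i\Z/\tfrac{n!}{(k,|q_i(C)|)}$, whence $\pi_{2n-2}(\G_k(S^2,G))\cong\prod_i\Z/(k,|q_i(C)|)$. Equality of these groups for $k$ and $l$ gives $\prod_i(k,|q_i(C)|)=\prod_i(l,|q_i(C)|)$, and since $s(G)=\mathrm{lcm}_i|q_i(C)|$ an elementary monotonicity argument in $p$-valuations yields $(k,s(G))=(l,s(G))$. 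The mixed case $H=SU(4n-2)^s\times Sp(2n-1)^t$ is handled identically in degree $8n-5$, because the inclusion $Sp(2n-1)\hookrightarrow SU(4n-2)$ induces an isomorphism on $\pi_{8n-5}$ (this is exactly how Proposition~\ref{Sp(n)} treats the odd case); no $p$-local retract of $Sp(2n-1)$ is needed. Your plan is not unreasonable, but to carry it out you would essentially have to reproduce this homotopy-group computation; the shortcut through Theorem~\ref{Samelson product} does not deliver the required order.
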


  The homotopy type of a gauge group $\G_k(X,G)$ is closely related with a Samelson product in $G$, as we will see in Section \ref{Gauge groups and Samelson products}. In our context, the Samelson product of a generator of $\pi_1(G)\cong\Z$ and the identity map of $G$ is of particular importance. Then we will prove the following theorem, which is of independent interest.

  \begin{theorem}
    \label{Samelson product}
    Let $G$ be a compact connected Lie group with $\pi_1(G)\cong\Z$, and let $\epsilon$ denote a generator of $\pi_1(G)$. Then the Samelson product $\langle\epsilon,1_G\rangle$ in $G$ is of order $s(G)$.
  \end{theorem}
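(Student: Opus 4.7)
The plan is to write $G=(S^1\times H)/C$ as in \eqref{decomposition G}, set $s=|p_2(C)|$, and prove $s\cdot\langle\epsilon,1_G\rangle=0$ and $\mathrm{ord}\langle\epsilon,1_G\rangle\ge s$ separately. Because the order of a homotopy class is intrinsic to $G$, doing so will simultaneously justify that $|p_2(C)|$ is independent of the choice of $C$.

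For the divisibility $s\langle\epsilon,1_G\rangle=0$, I would construct a central circle $S'\subset G$ whose inclusion realises $s\epsilon$ on $\pi_1$. Set $K=S^1\times p_2(C)\subset S^1\times H$; by the definition of $p_2(C)$, every element of $C$ lies in $K$, and $C$ permutes the $s$ components of $K$ transitively via the second projection, so the image $S'$ of $K$ in $G$ is a connected one-dimensional abelian subgroup of $G$, i.e.\ a circle. It is central because $K$ is contained in the centre of $S^1\times H$. The fibration $S'\to G\to G/S'=H/p_2(C)$ has base with fundamental group $p_2(C)$, which under the hypothesis $\pi_1(G)\cong\Z$ is forced to be cyclic of order $s$; its long exact sequence then identifies $\pi_1(S')\hookrightarrow\pi_1(G)$ with multiplication by $s$ on $\Z$. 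Hence $s\epsilon$ factors through the central subgroup $S'$, so the commutator defining $\langle s\epsilon,1_G\rangle$ is identically $e$.

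For the reverse inequality I would appeal to the classical fact that $\langle\epsilon_{U(N)},1_{U(N)}\rangle$ has order $N$ in $[\Sigma U(N),U(N)]$, together with naturality and bilinearity of the Samelson product. Choose a generator $z\in p_2(C)$ of order $s$ and, using the representation theory of the compact simply connected Lie group $H$, a representation $\rho\colon H\to SU(N)$ with $\rho(z)=e^{2\pi i/s}I_N$ (the condition forces $s\mid N$). Descend $(t,h)\mapsto t^{-N/s}\rho(h)$ to a homomorphism $\bar\rho\colon G\to U(N)$; a computation with a lift of $\epsilon$ to the universal cover of $G$ yields $\bar\rho_*\epsilon=-\tfrac{N}{s}\epsilon_{U(N)}$, and naturality of the Samelson product gives
\[
\bar\rho\circ\langle\epsilon,1_G\rangle\;=\;-\tfrac{N}{s}\,\langle\epsilon_{U(N)},1_{U(N)}\rangle\circ\Sigma\bar\rho\qquad\text{in }[\Sigma G,U(N)].
\]
The coefficient on the right has order exactly $s$ in $[\Sigma U(N),U(N)]$.

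The principal obstacle is the final detection step: showing that precomposition with $\Sigma\bar\rho$ does not decrease the order of this element inside $[\Sigma G,U(N)]$. I would approach this prime by prime---at primes $p\nmid s$ there is nothing to prove, while at $p\mid s$ the composite class should be detected via a characteristic-class invariant of the associated $U(N)$-bundle over $\Sigma G$ (for instance its first Chern class, or a refined $K$-theoretic invariant). Arranging the representation $\rho$ so that this invariant is visibly non-trivial modulo $p$ is where I expect the bulk of the technical work to lie.
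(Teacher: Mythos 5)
Your first half is sound. The construction of the central circle $S'\subset G$ with $\pi_1(S')\to\pi_1(G)$ of index $s$, so that $s\epsilon$ factors through the centre and $s\langle\epsilon,1_G\rangle=\langle s\epsilon,1_G\rangle=0$, is correct and is essentially the paper's upper-bound lemma in a slightly different guise: the paper runs the same central extension $S^1\to G\to H/p_2(C)$ from the quotient side, showing $s(G)\langle\epsilon,1_G\rangle$ lifts to the fibre $S^1$ and hence vanishes. Either version is fine.

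The lower bound is where the proposal has a genuine gap, and it is not merely a deferred technicality. First, the reduction to $U(N)$ only has a chance of detecting the order when $\bar\rho$ induces an isomorphism on the relevant homotopy groups; this happens precisely when $H=SU(n)$ (there $\bar\rho$ is a finite covering and the paper argues exactly as you propose, restricting along $\alpha_{2n-1}$ and using Bott) and for $Sp(2n-1)\hookrightarrow SU(4n-2)$, where $\pi_{8n-5}$ maps isomorphically. For the remaining groups the scheme breaks down: for $H=Sp(2n)$ the comparison map on the relevant homotopy group introduces a factor of $2=s(G)$, which kills exactly the information you need, and for $Spin(n)$, $E_6$, $E_7$ there is no reason the composite retains any torsion at all. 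Second, the specific detectors you name cannot work even in principle: the class $\bar\rho\circ\langle\epsilon,1_G\rangle$ is torsion, while $[S^1\wedge G,U]\cong\tilde{K}^{-1}(S^1\wedge G)$ is torsion-free for a compact Lie group $G$ with free $\pi_1$ (Hodgkin), so the stable class is zero and every Chern class and every $K$-theoretic invariant of the associated bundle vanishes. Any detection must be unstable, and this is exactly where the paper spends its Sections 4 and 5: it abandons $U(N)$ for these cases and instead detects $\langle\bar\epsilon,\bar q\rangle$ in the quotient $K=H/p_2(C)$ using mod $p$ cohomology of $BK$ with Steenrod operations (for $PSp(2n)$, $SO(n)$, $Ad(E_7)$) or the Hopf algebra structure of $H^*(K;\Z/p)$ and the commutator map (for $PO(4n)$, $Ad(E_6)$), together with a lemma transferring nontriviality of $\langle\bar\epsilon,\bar q\rangle$ back to $\langle\epsilon,p\rangle$. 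Your outline contains no substitute for this case-by-case cohomological input, so the inequality $\mathrm{ord}\,\langle\epsilon,1_G\rangle\ge s$ remains unproved outside the $SU(n)$ case.
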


  Now we consider an application. Gauge groups over a Riemann surface are closely related to the moduli spaces of stable vector bundles over a Riemann surface as follows. Let $X$ be a Riemann surface of genus $g$, and let $M(n,k)$ denote the moduli space of stable vector bundles over $X$ of rank $n$ and degree $k$. Daskalopoulos and Uhlenbeck \cite{DU} showed that there is an isomorphism
  \[
    \pi_i(M(n,k))\cong\pi_{i-1}(\G_k(X,U(n)))
  \]
  for $2<i\le 2(g-1)(n-1)-2$ and $(n,k)\ne(2,2)$. There is a polystable Higgs bundle analog due to Bradlow, Garcia-Prada and Gothen \cite{BGG}. Then we can compute the homotopy groups of these moduli spaces through the following homotopy decomposition.

  \begin{theorem}
    \label{main decomposition}
    Let $G$ be a compact connected Lie group with $\pi_1(G)\cong\Z$, and let $X$ be a compact connected Riemann surface of genus $g$. If $s(G)$ divides $k$, then
    \[
      \G_k(X,G)\simeq G\times(\Omega G)^{2g}\times\Omega^2G.
    \]
    Moreover, the above homotopy equivalence also holds after localizing at $p$ whenever $p$ does not divide $s(G)$.
  \end{theorem}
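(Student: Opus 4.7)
The proof proceeds from the evaluation fibration
\[
\operatorname{Map}^*_k(X, BG) \to \operatorname{Map}_k(X, BG) \to BG,
\]
which, upon looping, becomes
\[
F_k \to \G_k(X, G) \to G, \qquad (*)
\]
with fiber $F_k = \Omega\operatorname{Map}^*_k(X, BG)$. The plan is to decompose $F_k$ as a product and show that the connecting map $\partial_k \colon \Omega G \to F_k$ is null-homotopic whenever $s(G)\mid k$, which forces $(*)$ to split.

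For the fiber, endow $X$ with its standard CW structure $X = \bigl(\bigvee_{i=1}^{2g} S^1\bigr)\cup_\omega D^2$, where $\omega$ is the product of commutators of the standard generators of $\pi_1(\bigvee_{2g} S^1)$. The cofibration $\bigvee_{2g} S^1\hookrightarrow X \to S^2$ induces a homotopy fibration
\[
\Omega^2 G \longrightarrow F_k \longrightarrow (\Omega G)^{2g} \xrightarrow{\ \omega^*\ } \Omega G,
\]
in which $\omega^*$ is the corresponding product of commutators in $\Omega G$. Since $\Omega G\simeq\Omega^2 BG$ is a double loop space, hence homotopy commutative, every commutator vanishes, so $\omega^*\simeq*$ and the sequence splits:
\[
F_k \simeq (\Omega G)^{2g}\times \Omega^2 G,
\]
with the $\Omega^2 G$ summand identified with the fiber inclusion induced by the collapse $X\to S^2$.

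Next I identify $\partial_k$ through this splitting. Naturality of the connecting map under the map of evaluation fibrations coming from $X\to S^2$ yields a factorization $\partial_k=\iota\circ\partial_k^{S^2}$, where $\iota\colon\Omega^2 G\hookrightarrow F_k$ is the fiber inclusion (which coincides with the $\Omega^2 G$ summand inclusion under the above splitting), and $\partial_k^{S^2}\colon\Omega G\to\Omega^2 G$ is the connecting map of the evaluation fibration over $S^2$. A standard calculation for the $S^2$ case identifies $\partial_k^{S^2}=\Omega\bigl(k\langle\epsilon,1_G\rangle\bigr)$, where $\langle\epsilon,1_G\rangle\colon G\to\Omega G$ is the Samelson product. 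By Theorem~\ref{Samelson product}, $\langle\epsilon,1_G\rangle$ has order $s(G)$, so $k\langle\epsilon,1_G\rangle=0$ whenever $s(G)\mid k$, and hence $\partial_k\simeq*$. Thus $(*)$ splits, giving
\[
\G_k(X, G)\simeq G\times(\Omega G)^{2g}\times\Omega^2 G.
\]

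For the $p$-local refinement, when $p\nmid s(G)$ the element $\langle\epsilon,1_G\rangle$, having order $s(G)$, vanishes after localization at $p$ (since $s(G)$ is invertible in $\Z_{(p)}$), so $\partial_k$ is $p$-locally null for every $k$ and the same splitting holds $p$-locally. The main obstacle is the precise identification of $\partial_k^{S^2}$ with $\Omega(k\langle\epsilon,1_G\rangle)$ together with setting up the naturality diagram needed to factor $\partial_k$ through $\iota$; once these classical ingredients are assembled, Theorem~\ref{Samelson product} provides the decisive torsion estimate.
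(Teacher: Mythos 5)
Your proof is correct and follows essentially the same route as the paper: identify the connecting map over $S^2$ with the adjoint of $k\langle\epsilon,1_G\rangle$ (Lemma~\ref{connecting map}) and kill it using Theorem~\ref{Samelson product}, both integrally when $s(G)\mid k$ and $p$-locally when $p\nmid s(G)$. The only difference is that you re-derive the splitting $\Omega\map_*(X,BG;k)\simeq(\Omega G)^{2g}\times\Omega^2G$ from the cell structure of $X$ and the homotopy commutativity of $\Omega^2BG$, which the paper instead imports as Proposition~\ref{homotopy decomposition} (Theriault's decomposition $\G_k(X,G)\simeq(\Omega G)^{2g}\times\G_k(S^2,G)$); just be sure to deduce the triviality of the fibration $\G_k(X,G)\to G$ from the nullity of the unlooped connecting map $G\to\map_*(X,BG;k)$, not merely of its loop $\partial_k\colon\Omega G\to F_k$.
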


  The paper is structured as follows Section \ref{Gauge groups and Samelson products} recalls a connection of gauge groups to Samelson products, and then proves Theorems \ref{main 1} and \ref{main decomposition} by assuming Theorem \ref{Samelson product} holds. Section \ref{Samelson products in a Lie group} shows some general results on Samelson products in a Lie group, which will be used for a practical computation. Sections \ref{Classical case} and \ref{Exceptional case} compute the Samelson products in $G$ when $H$ is simple. Finally, Section \ref{Proofs} collects all results so far together to prove Theorems \ref{Samelson product} and Theorem \ref{main 2}.

  \subsection*{Acknowledgement}

  The authors were partly supported by JSPS KAKENHI No. 17K05263 (Kameko), No. 17K05248 (Kishimoto) and No. 21J10117 (Takeda).


  \section{Gauge groups and Samelson products}\label{Gauge groups and Samelson products}

  This section recalls a connection of gauge groups to Samelson products, and then Theorems \ref{main 1} and \ref{main decomposition} are proved by assuming Theorem \ref{Samelson product} holds. First, we recall a connection of gauge groups to mapping spaces. Let $G$ be a topological group, and let $P$ be a principal $G$-bundle over a base $X$, which is classified by a map $\alpha\colon X\to BG$. Recall that the gauge group of $P$, denoted by $\G(P)$, is a topological group of $G$-equivariant self-maps of $P$ which fix $X$. Gottlieb \cite{G} proved that there is a natural homotopy equivalence
  \[
    B\G(P)\simeq\map(X,BG;\alpha)
  \]
  where $\map(A,B;f)$ denotes the path component of the space of maps $\map(A,B)$ containing a map $f\colon A\to B$. Then evaluating at the basepoint of $X$ yields a homotopy fibration
  \begin{equation}
    \label{evaluation fibration}
    \map_*(X,BG;\alpha)\to B\G(P)\to BG
  \end{equation}
  where $\map_*(X,BG;\alpha)$ is a subspace of $\map(X,BG;\alpha)$ consisting of basepoint preserving maps. So the gauge group $\G(P)$ is homotopy equivalent to the homotopy fiber of the connecting map
  \[
    \partial_\alpha\colon G\to\map_*(X,BG;\alpha)
  \]
  of the above homotopy fibration.

  Next, we assume $X=S^n$ for $n\ge 1$ and describe the connecting map $\partial_\alpha$. Clearly, there is a homotopy equivalence $\map_*(S^n,BG;\alpha)\simeq\Omega^{n-1}_0G$, where $\Omega^{n-1}_0G$ denotes the path component of $\Omega^{n-1}G$ containing the constant map. Then by adjointing, the connecting map $\partial_\alpha$ corresponds to a map
  \[
    d_\alpha\colon S^{n-1}\wedge G\to G
  \]
  The original definition of Whitehead products in \cite{W} and adjointness of Whitehead products and Samelson products prove the following.

  \begin{lemma}
    \label{connecting map}
    The map $d_\alpha$ is the Samelson product $\langle\bar{\alpha},1_G\rangle$ in $G$, where $\bar{\alpha}\colon S^{n-1}\to G$ is the adjoint of $\alpha\colon S^n\to BG$.
  \end{lemma}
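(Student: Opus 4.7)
The plan is to reformulate the claim as an identity of Whitehead products via the loop--suspension adjunction, verify that identity directly from Whitehead's geometric definition, and then invoke the classical adjointness between Whitehead and Samelson products to return to the Samelson form.

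First we apply the loop--suspension adjunction once more. The adjoint of $d_\alpha\colon S^{n-1}\wedge G\to G$ is a map $D_\alpha\colon S^n\wedge G\to BG$, and the adjoint of $1_G\colon G\to G=\Omega BG$ is the canonical map $e\colon\Sigma G\to BG$, i.e., the structure map of the universal $G$-bundle. Since Samelson products correspond to Whitehead products under this adjunction (up to a standard sign convention), the desired identity $d_\alpha=\langle\bar\alpha,1_G\rangle$ is equivalent to $D_\alpha=[\alpha,e]$ as maps $S^n\wedge G\to BG$.

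Next we would identify $D_\alpha$ geometrically. Unwinding the construction of the connecting map of the evaluation fibration~\eqref{evaluation fibration}, $\partial_\alpha(g)\colon S^n\to BG$ is the basepoint-preserving map obtained by lifting a loop $g\in\Omega BG=G$ to a path in $\map(S^n,BG;\alpha)$ starting at $\alpha$; on a small disc about the basepoint it traces out the loop $g$, while on the complement it essentially reproduces $\alpha$. Passing to the full adjoint $D_\alpha\colon S^n\wedge G\to BG$, the restrictions to the two axes $S^n\vee\Sigma G\hookrightarrow S^n\times\Sigma G$ are $\alpha$ and $e$ respectively, and $D_\alpha$ arises from the attaching map of the top cell of $S^n\times\Sigma G$ post-composed with $\alpha\vee e\colon S^n\vee\Sigma G\to BG$. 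This is exactly Whitehead's original definition~\cite{W} of the Whitehead product $[\alpha,e]$, so $D_\alpha=[\alpha,e]$.

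The main obstacle will be this geometric identification: matching the explicit model of the connecting map (via lifts in the evaluation fibration) with Whitehead's model of the Whitehead product (via the attaching map of the top cell of a product of suspensions). The bookkeeping with basepoints, orientations, and sign conventions is the delicate part, although it is routine once the correct adjunctions are in place. With $D_\alpha=[\alpha,e]$ established, the adjointness between Whitehead and Samelson products gives $d_\alpha=\langle\bar\alpha,1_G\rangle$, completing the proof.
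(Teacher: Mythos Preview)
Your proposal is correct and follows essentially the same approach as the paper: the paper's entire justification is the sentence ``The original definition of Whitehead products in \cite{W} and adjointness of Whitehead products and Samelson products prove the following,'' and your write-up is a reasonable expansion of exactly that outline---adjoint $d_\alpha$ to $D_\alpha$, identify $D_\alpha$ with the generalized Whitehead product $[\alpha,e]$ via Whitehead's attaching-map description, and translate back via the Samelson--Whitehead adjunction.
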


  The following lemma due to Theriault \cite{T2} shows how to identify the homotopy type of a gauge group $\G(P)$ from the order of a Samelson product $\langle\bar{\alpha},1_G\rangle$.

  \begin{lemma}
    \label{homotopy fiber}
    Suppose that a map $f\colon X\to Y$ into an H-space $Y$ is of order $n<\infty$. Then $(n,k)=(n,l)$ implies $F_k{}_{(p)}\simeq F_l{}_{(p)}$ for any prime $p$, where $F_k$ denotes the homotopy fiber of a map $k\circ f\colon X\to Y$
  \end{lemma}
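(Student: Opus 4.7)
The plan is to reduce the lemma to a standard fact about $p$-local power maps on the $H$-space $Y$. Since $f$ has order $n$, i.e.\ $n\cdot f\simeq\ast$ in $[X,Y]$, setting $d=(n,k)=(n,l)$ I first replace $k$ and $l$ by their residues modulo $n$, writing them as $k=da$ and $l=db$ with $\gcd(a,n/d)=\gcd(b,n/d)=1$. Putting $g=d\cdot f\colon X\to Y$, whose order divides $n/d$, the claim reduces to showing $(F_{a\cdot g})_{(p)}\simeq(F_{b\cdot g})_{(p)}$ for every prime $p$. The key identification is $r\cdot g=r_Y\circ g$, where $r_Y\colon Y\to Y$ is the $r$-fold $H$-space power map; and the basic input is that $(r_Y)_{(p)}$ is a homotopy equivalence whenever $\gcd(r,p)=1$, since it then acts as the unit $r\in\Z_{(p)}$ on each $\pi_i(Y_{(p)})$.

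A case split on $p$ completes the argument. If $p\mid n/d$, then both $a$ and $b$ are coprime to $p$, so $(a_Y)_{(p)}$ and $(b_Y)_{(p)}$ are homotopy equivalences; post-composition with an equivalence preserves the homotopy fiber, giving $(F_{a\cdot g})_{(p)}\simeq(F_g)_{(p)}\simeq(F_{b\cdot g})_{(p)}$. If $p\nmid n/d$, then the order $m$ of $g$ (which divides $n/d$) is coprime to $p$; combining $m_Y\circ g\simeq\ast$ with the $p$-local invertibility of $m_Y$ forces $g_{(p)}\simeq\ast$, so both $a\cdot g$ and $b\cdot g$ are $p$-locally null and their homotopy fibers are each $p$-locally equivalent to $X\times\Omega Y$.

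The main technical point will be justifying that $(r_Y)_{(p)}$ is a homotopy equivalence whenever $\gcd(r,p)=1$; this is standard for a connected nilpotent $H$-space, so one either verifies the relevant hypotheses on the ambient $Y$ in the intended application or simply invokes Theriault's treatment \cite{T2} directly.
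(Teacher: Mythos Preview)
The paper does not supply its own proof of this lemma; it simply attributes the statement to Theriault and cites \cite{T2}. Your argument is correct and is essentially a reconstruction of the standard proof: factor $k\cdot f$ and $l\cdot f$ through $g=d\cdot f$ with $d=(n,k)=(n,l)$, then use that the $r$-th power map on an $H$-space localizes to a homotopy equivalence whenever $r$ is a $p$-local unit, and that post-composition with an equivalence preserves homotopy fibers. The only delicate point is the one you already flag, namely that $p$-localization behaves well on $Y$ and commutes with taking homotopy fibers; in the paper's intended application $Y$ is a compact connected Lie group and $X=S^1\wedge G$ is simply connected, so the requisite nilpotency hypotheses hold and there is no difficulty.
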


  Finally, we recall a homotopy decomposition of a gauge group. Theriault \cite{T1} showed a homotopy decomposition of a gauge group over principal $U(n)$-bundle over a Riemann surface. We can easily see that his proof works in verbatim for any compact connected Lie group $G$ with $\pi_1(G)\cong\Z$. Then we get:

  \begin{proposition}
    \label{homotopy decomposition}
    Let $G$ be a compact connected Lie group with $\pi_1(G)\cong\Z$, and let $X$ be a compact connected Riemann surface of genus $g$. Then there is a homotopy equivalence
    \[
      \G_k(X,G)\simeq(\Omega G)^{2g}\times\G_k(S^2,G).
    \]
  \end{proposition}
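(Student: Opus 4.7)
The plan is to follow Theriault's argument in \cite{T1}, which treats $G=U(n)$, and to observe that the only property of $U(n)$ used is the connectedness of the group (equivalently, $\pi_1(BG)=0$), which holds for any compact connected Lie group. Hence Theriault's proof goes through verbatim for any $G$ with $\pi_1(G)\cong\Z$. The starting point is Gottlieb's equivalence $B\G_k(X,G)\simeq\map(X,BG;\alpha_k)$, together with the CW decomposition $X\simeq(\bigvee^{2g}S^1)\cup_{\phi}D^2$ of a Riemann surface of genus $g$, in which the attaching map $\phi$ is a product of commutators of the wedge summands.

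Applying $\map_*(-,BG)$ to the cofibration $\bigvee^{2g}S^1\xrightarrow{i}X\xrightarrow{q}S^2$ produces a homotopy fibration
\[
\map_*(S^2,BG;\beta)\to\map_*(X,BG;\alpha_k)\xrightarrow{i^*}\map_*\bigl(\textstyle\bigvee^{2g}S^1,BG\bigr)\simeq G^{2g},
\]
where $\beta\colon S^2\to BG$ represents $k\in\pi_2(BG)\cong\Z$; the base $G^{2g}$ is connected because $\pi_1(BG)=0$. The key step is to split this fibration. Since $\pi_1(BG)=0$, every map $\bigvee^{2g}S^1\to BG$ is nullhomotopic, so the obstruction to extending any such map across the 2-cell of $X$ automatically vanishes. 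I would build a parametrized map $\sigma\colon G^{2g}\times X\to BG$ that restricts to the canonical evaluation on $G^{2g}\times\bigvee^{2g}S^1$, restricts to a chosen representative of $\alpha_k$ on $\{*\}\times X$, and extends across the 2-cell via a parametrized nullhomotopy of the resulting product-of-commutators map $G^{2g}\times S^1\to BG$. Adjointing $\sigma$ gives a section of $i^*$, and combined with the fiber inclusion $q^*$ and the H-space structure on $\map_*(S^2,BG;\beta)\simeq\Omega^2_0BG$, yields a splitting
\[
\map_*(X,BG;\alpha_k)\simeq\map_*(S^2,BG;\beta)\times G^{2g}.
\]

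Finally, I would transfer this splitting to $\G_k(X,G)$ itself. By the discussion in Section \ref{Gauge groups and Samelson products}, $\G_k(X,G)$ is the homotopy fiber of the connecting map $\partial_{\alpha_k}\colon G\to\map_*(X,BG;\alpha_k)$. Naturality of $\partial$ with respect to $q\colon X\to S^2$ shows that the first coordinate of $\partial_{\alpha_k}$ under the splitting is homotopic to $\partial_\beta$, while naturality with respect to $i\colon\bigvee^{2g}S^1\to X$, together with the triviality of any principal $G$-bundle over $\bigvee^{2g}S^1$, shows that the second coordinate is nullhomotopic. Hence $\partial_{\alpha_k}$ is homotopic to the composite
\[
G\xrightarrow{\partial_\beta}\map_*(S^2,BG;\beta)\hookrightarrow\map_*(S^2,BG;\beta)\times G^{2g},
\]
where the second map is inclusion at the basepoint of $G^{2g}$. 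Taking homotopy fibers gives $\G_k(X,G)\simeq\G_k(S^2,G)\times\Omega G^{2g}\simeq(\Omega G)^{2g}\times\G_k(S^2,G)$. The main obstacle is ensuring that the parametrized extension $\sigma$ lands in the component of $\alpha_k$ rather than an adjacent component indexed by $\pi_2(BG)\cong\Z$; this component-tracking is the technical heart of Theriault's construction in \cite{T1}, and its extension to general $G$ with $\pi_1(G)\cong\Z$ is immediate because the argument uses no $U(n)$-specific structure beyond $\pi_1(BU(n))=0$.
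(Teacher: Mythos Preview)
Your proposal is correct and takes essentially the same approach as the paper: the paper's entire proof of this proposition is the one-sentence observation that Theriault's argument in \cite{T1} for $U(n)$ works verbatim for any compact connected $G$ with $\pi_1(G)\cong\Z$, and you have stated exactly this (and then supplied a sketch of what Theriault's construction looks like). There is no additional content in the paper's own proof beyond the citation.
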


  Now we prove Theorems \ref{main 1} and \ref{main decomposition} by assuming Theorem \ref{Samelson product} holds.

  \begin{proof}
    [Proof of Theorem \ref{main 1}]
    Combine Lemmas \ref{connecting map}, \ref{homotopy fiber}, Proposition \ref{homotopy decomposition} and Theorem \ref{Samelson product}.
  \end{proof}

  \begin{proof}
    [Proof of Theorem \ref{main decomposition}]
    By Lemma \ref{connecting map} and Theorem \ref{Samelson product}, if $k$ is divisible by $s(G)$, then $\G_k(S^2,G)$ is homotopy equivalent to the homotopy fiber of the constant map $G\to\Omega_0G$. So since $\pi_2(G)=0$, $\G_k(S^2,G)\simeq G\times\Omega^2G$. Thus by Proposition \ref{homotopy decomposition}, the proof is done.
  \end{proof}


  \section{Samelson products in Lie groups}\label{Samelson products in a Lie group}

  This section shows some criteria for computing Samelson products in a Lie group. For the rest of the paper, we will use the following notation.

  \begin{itemize}
    \item Let $G$ be a compact connected Lie group with $\pi_1(G)\cong\Z$.

    \item Let $\epsilon_G$ denote a generator of $\pi_1(G)\cong\Z$

    \item Let $H$ and $C$ be as in the decomposition \eqref{decomposition G}.

    \item Let $j_H\colon\Sigma H\to BH$ denote the natural map.

    \item Let $p_G\colon S^1\times H\to G$ denote the projection.

    \item Let $p_1\colon S^1\times H\to S^1$ and $p_2\colon S^1\times H\to H$ denote projections.

    \item Let $K=H/p_2(C)$.

    \item Let $q_G\colon G\to K$ and $\bar{q}_K\colon H\to K$ denote projections.
  \end{itemize}

  We will abbreviate $\epsilon_G,j_H,p_G,q_G,\bar{q}_K$ to $\epsilon,j,p,q,\bar{q}$, respectively, if $G,H,K$ are clear from the context. First, we show two properties of the group $C$.

  \begin{lemma}
    \label{p_2(C)}
    The abelian group $p_2(C)$ is cyclic.
  \end{lemma}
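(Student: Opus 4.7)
The plan is to show that $C$ itself is finite cyclic, from which it follows immediately that its image $p_2(C)$ is cyclic. Note that $C$ is contained in the center $Z(S^1\times H) = S^1\times Z(H)$, and since $H$ is compact and simply connected, $Z(H)$ is finite.

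The first step will be to rule out $C$ being positive-dimensional. Because $Z(H)$ is finite, the identity component of $S^1\times Z(H)$ is $S^1\times 1$, so any connected closed subgroup of $S^1\times Z(H)$ is contained in $S^1\times 1$. Hence the identity component $C_0$ is either trivial or equal to $S^1\times 1$. In the latter case we would have $G\cong H/(C/C_0)$, and since $H$ is simply connected this forces $\pi_1(G)\cong C/C_0$, which is finite, contradicting $\pi_1(G)\cong\Z$. Thus $C_0$ is trivial and $C$ is discrete, hence finite.

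With $C$ finite and central, it acts freely on $S^1\times H$ and the quotient map $S^1\times H\to G$ is a regular covering with deck transformation group $C$. I would then invoke the associated long exact sequence of homotopy groups, which reduces to the short exact sequence
\[
0 \to \pi_1(S^1\times H) \to \pi_1(G) \to C \to 0,
\]
that is, $0\to\Z\to\Z\to C\to 0$. This forces $C\cong\Z/n\Z$ for some $n\ge 1$, and therefore $p_2(C)$, being a homomorphic image of a cyclic group, is cyclic.

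The only delicate point is excluding the possibility $C_0 = S^1\times 1$; once $C$ is known to be finite, cyclicity is an immediate consequence of the covering space exact sequence, so no further obstacles are anticipated.
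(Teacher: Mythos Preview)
Your proof is correct, and it proves something a bit stronger than needed: you show that $C$ itself is finite cyclic (by first excluding $C_0=S^1\times 1$ on $\pi_1$ grounds, then applying the covering exact sequence for $p_G\colon S^1\times H\to G$ to exhibit $C$ as a quotient of $\pi_1(G)\cong\Z$), and deduce cyclicity of $p_2(C)$ as a corollary.

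The paper's argument is different. Instead of the covering $S^1\times H\to G$, it uses the fibration $S^1\to G\xrightarrow{q}K=H/p_2(C)$ and reads off from the homotopy exact sequence that $p_2(C)\cong\pi_1(K)$ is a quotient of $\pi_1(G)\cong\Z$. This goes directly to $p_2(C)$ without ever discussing $C_0$ or proving that $C$ is cyclic. Both arguments rest on the same underlying idea---realizing the relevant group as a quotient of $\pi_1(G)$ via a long exact sequence---but applied to different fibrations. Your route yields the extra information that $C$ is cyclic and makes the finiteness of $C$ explicit; the paper's route is shorter and also sets up the fibration \eqref{fibration} that is reused in Lemma~\ref{upper bound}.
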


  \begin{proof}
    There is a fibration
    \begin{equation}
      \label{fibration}
      S^1\to G\xrightarrow{q}K
    \end{equation}
    and so by the homotopy exact sequence, we can see that$\pi_1(K)\cong p_2(C)$ is a quotient of $\pi_1(G)\cong\Z$. Then $p_2(C)$ is a cyclic group, as stated.
  \end{proof}

  \begin{lemma}
    \label{C}
    We may choose a group $C$ such that $|p_1(C)|=s(G)$.
  \end{lemma}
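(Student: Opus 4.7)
The plan is to pass to the universal cover of $G$ and descend to a particularly symmetric presentation $(S^1\times H)/C$.

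Since $H$ is compact simply-connected and $\pi_1(G)\cong\Z$, the universal cover of $G$ is $\R\times H$, and the deck group $D$ is a discrete central subgroup of $\R\times H$ isomorphic to $\Z$. Because $Z(\R\times H)=\R\times Z(H)$ with $Z(H)$ finite, I would first observe that the kernel of the restriction $p_1|_D\colon D\to\R$ lies in $\{0\}\times Z(H)$, so is simultaneously finite and torsion-free, and therefore trivial. Consequently $D$ is generated by a single element $(t_0,c_0)$ with $t_0>0$ and $c_0\in Z(H)$; I would write $d$ for the (finite) order of $c_0$.

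Next, since $d\cdot(t_0,c_0)=(dt_0,1)\in D$, the covering $\R\times H\to G$ factors through $(\R/dt_0\Z)\times H=:S^1\times H$, and the kernel of the resulting covering $S^1\times H\to G$ is $C:=D/\langle(dt_0,1)\rangle$, cyclic of order $d$ and generated by the class of $(t_0,c_0)$. A direct check then shows $|p_1(C)|=d$ (since $t_0$ has order exactly $d$ in $\R/dt_0\Z$) and $|p_2(C)|=d$ (since $c_0$ has order $d$ in $H$), so for this $C$ we have $|p_1(C)|=d=|p_2(C)|=s(G)$, as required.

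I do not anticipate a serious obstacle: the argument reduces to choosing the circumference of the $S^1$-factor to be exactly $dt_0$, so that the $H$-component of the generator of $D$ collapses to the identity only after precisely $d$ iterations and hence forces the two projections to have the same order. The essential input is the torsion-freeness of $D\cong\Z$ together with the finiteness of $Z(H)$, which together pin down the clean generator of the form $(t_0,c_0)$.
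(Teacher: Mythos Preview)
Your argument is correct and takes a genuinely different route from the paper's. The paper starts from an arbitrary presentation $G\cong(S^1\times H)/C$ and proceeds in two steps: first it shows the inequality $|p_1(C)|\ge |p_2(C)|$ always holds (otherwise $|p_1(C)|\cdot C$ would be a nontrivial finite subgroup of $1\times H$, forcing torsion in $\pi_1(G)\cong\Z$); second, when $|p_1(C)|>s(G)$ it quotients $S^1\times H$ by the finite subgroup $s(G)\cdot C\subset S^1\times 1$, which is again an $S^1\times H$, and replaces $C$ by $C/(s(G)\cdot C)$ to trim $|p_1|$ down to $s(G)$. Your approach instead passes to the universal cover $\R\times H$, reads off the generator $(t_0,c_0)$ of the deck group $D\cong\Z$, and then \emph{chooses} the circle to have circumference $dt_0$ with $d=\mathrm{ord}(c_0)$, so that the resulting $C=D/\langle(dt_0,1)\rangle$ is visibly cyclic of order $d$ with both projections of order $d$. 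The trade-offs: your construction is more direct and, as a bonus, makes it transparent that $s(G)$ equals the order of the $H$-component of a generator of $\pi_1(G)$ inside $Z(H)$, hence is intrinsic to $G$ independently of Theorem~\ref{Samelson product}; the paper's argument, on the other hand, yields the auxiliary inequality $|p_1(C)|\ge s(G)$ valid for \emph{every} presentation, not just the good one.
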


  \begin{proof}
    Note that $p_2(C)$ is a cyclic group by Lemma \ref{fibration}. We prove an inequality $|p_1(C)|\ge s(G)$ always holds. If $|p_1(C)|<s(G)$, then $C_1=|p_1(C)|C$ is a non-trivial subgroup of the center of $1\times H\subset S^1\times H$. In particular, there is a covering
    \[
      C/C_1\to(S^1\times H)/C_1\to G.
    \]
    Then $\pi_1(G)\cong\Z$ includes a non-trivial finite abelian group $C_1$, which is a contradiction. Thus $|p_1(C)|\ge s(G)$.

    Suppose that $|p_1(C)|>s(G)$. Then $C_2=s(G)C$ is a finite subgroup of $S^1\times 1\subset S^1\times H$. Then $(S^1\times H)/C_2\cong S^1\times H$, implying
    \[
      G\cong(S^1\times H)/C\cong((S^1\times H)/C_2)/(C/C_2)\cong(S^1\times H)/(C/C_2).
    \]
    By definition, $|p_1(C/C_2)|=|p_2(C/C_1)|=|p_2(C)|=s(G)$, and thus the proof is finished.
  \end{proof}

  By Lemma \ref{p_2(C)}, $\pi_1(K)\cong p_2(C)$ is a cyclic group of order $s(G)$. For the rest of this section, we will also use the following notation.

  \begin{itemize}
    \item Let $\bar{\epsilon}_K$ denote a generator of $\pi_1(K)$.
  \end{itemize}

  We will abbreviate it by $\bar{\epsilon}$ if $K$ is clear from the context.

  Next, we show an upper bound and a lower bound for the order of $\langle\epsilon,1_G\rangle$.

  \begin{lemma}
    \label{upper bound}
    The order of $\langle\epsilon,1_G\rangle$, hence $\langle\epsilon,p\rangle$, divides $s(G)$.
  \end{lemma}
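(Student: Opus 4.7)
The plan is to exploit the fact that the $S^1$-fibre in the fibration \eqref{fibration} sits centrally inside $G$. Write $\iota\colon S^1\to G$ for the fibre inclusion. Unwinding the formula $q\colon[s,h]C\mapsto h\cdot p_2(C)$, the fibre is realised as the image of $S^1\times 1\subset S^1\times H$ under $p$. Since $S^1\times 1$ is central in $S^1\times H$ and $p$ is a surjective homomorphism, the image of $\iota$ lies in the centre of $G$. The commutator map of $G$, when pulled back via $(\iota,1_G)$, is therefore constant, so $\langle\iota,1_G\rangle$ is null-homotopic.

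The next step is to identify $[\iota]\in\pi_1(G)$. Applying the long exact homotopy sequence to \eqref{fibration}, together with Lemma~\ref{p_2(C)} which identifies $\pi_1(K)\cong p_2(C)$ as cyclic of order $s(G)$, yields
\[
\pi_1(S^1)\xrightarrow{\iota_*}\pi_1(G)\to\pi_1(K)\to 0,
\]
so $\iota_*$ has image of index $s(G)$ in $\pi_1(G)\cong\Z$. Hence $[\iota]=\pm s(G)\cdot[\epsilon]$. Because the Samelson product is additive in a factor represented by a map from $S^1$ (using the loop-sum coming from the H-space structure on $\Omega G$), this gives $\langle\iota,1_G\rangle\simeq\pm s(G)\cdot\langle\epsilon,1_G\rangle$. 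Combined with the nullity from the previous paragraph, $s(G)\cdot\langle\epsilon,1_G\rangle\simeq\ast$, i.e.\ the order of $\langle\epsilon,1_G\rangle$ divides $s(G)$.

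For the second clause, naturality of the Samelson product in its second argument under precomposition yields $\langle\epsilon,p\rangle=\langle\epsilon,1_G\rangle\circ(\mathrm{id}_{S^1}\wedge p)$, so the order of $\langle\epsilon,p\rangle$ also divides $s(G)$.

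I do not foresee a genuine obstacle. The only points requiring care are verifying that the fibre inclusion $\iota$ coincides with the image of the central circle $S^1\times 1$ under $p$, which is a short unravelling of the definition of $q$, and justifying the additivity of $\langle-,1_G\rangle$ as a function of the class $[\iota]\in\pi_1(G)$, which is standard once one recalls that the commutator map is trivial whenever one factor lies in the centre.
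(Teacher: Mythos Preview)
Your argument is correct, but it takes a route dual to the paper's. Both proofs exploit the fibration $S^1\xrightarrow{\iota}G\xrightarrow{q}K$ and the additivity of the Samelson product in its $S^1$-factor, but they go in opposite directions along the fibre sequence. The paper pushes $\langle\epsilon,1_G\rangle$ forward to $K$ via $q$: since $q_*\epsilon=\bar\epsilon$ has order $s(G)$ in $\pi_1(K)$, one gets $q_*(s(G)\langle\epsilon,1_G\rangle)=\langle s(G)\bar\epsilon,q\rangle=0$, and then the simple connectivity of $S^1\wedge G$ forces the lift to the fibre $S^1$ to be null. You instead work at the fibre: the inclusion $\iota$ lands in the centre of $G$, so $\langle\iota,1_G\rangle=0$ on the nose from the commutator description, and the long exact sequence (using $\pi_2(K)=0$) identifies $[\iota]=\pm s(G)\epsilon$. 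Your approach is slightly more direct in that the vanishing is immediate rather than obtained by a lift-and-kill argument; the paper's approach has the mild advantage that it uses only the homomorphism $q$ and the order of $\bar\epsilon$, without needing to verify that the fibre is central. Either way the linearity step $\langle s(G)\epsilon,1_G\rangle=s(G)\langle\epsilon,1_G\rangle$ is the same, and your justification via the adjunction $[S^1\wedge G,G]\cong[G,\Omega G]$ is fine.
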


  \begin{proof}
    The proof of Lemma \ref{p_2(C)} implies $q\circ\epsilon=\bar{\epsilon}$. Then since $q$ is a homomorphism, we get
    \[
      q_*(s(G)\langle\epsilon,1_G\rangle)=s(G)\langle  q\circ\epsilon,q\rangle=\langle s(G)\bar{\epsilon},q\rangle=0.
    \]
    So since there is a fibration \eqref{fibration}, $s(G)\langle\epsilon,1_G\rangle$ lifts to a map $S^1\wedge G\to S^1$. Since $S^1\wedge G$ is simply-connected, this lift is trivial, and thus $s(G)\langle\epsilon,1_G\rangle$ itself is trivial, completing the proof.
  \end{proof}

  \begin{lemma}
    \label{lower bound}
    The order of $\langle\bar{\epsilon},\bar{q}\rangle$ divides the order of $\langle\epsilon,p\rangle$.
  \end{lemma}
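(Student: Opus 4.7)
The plan is to push $\langle\epsilon,p\rangle$ down via $q$ and then pull the result back along a section of $p_2$, using naturality of the Samelson product on both sides.

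First I would observe that $q\colon G\to K$ is a homomorphism, so by naturality of Samelson products,
\[
q_*\langle\epsilon,p\rangle=\langle q\circ\epsilon,\,q\circ p\rangle.
\]
The proof of Lemma \ref{upper bound} already identifies $q\circ\epsilon=\bar{\epsilon}$. For the second factor, note that the composite $q\circ p\colon S^1\times H\to K$ sends the subgroup $S^1\times 1$ into the kernel of $q$ (which is exactly $p(S^1\times 1)$), so $q\circ p$ factors through $p_2\colon S^1\times H\to H$; a direct check on elements shows the resulting map $H\to K$ is $\bar{q}$, that is, $q\circ p=\bar{q}\circ p_2$. Hence
\[
q_*\langle\epsilon,p\rangle=\langle\bar{\epsilon},\,\bar{q}\circ p_2\rangle.
\]

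Next I would precompose with the inclusion $i_2\colon H\hookrightarrow S^1\times H$, $h\mapsto(1,h)$, which is a homomorphism satisfying $p_2\circ i_2=1_H$. By naturality in the second variable,
\[
\langle\bar{\epsilon},\,\bar{q}\circ p_2\rangle\circ(1_{S^1}\wedge i_2)=\langle\bar{\epsilon},\,\bar{q}\circ p_2\circ i_2\rangle=\langle\bar{\epsilon},\bar{q}\rangle.
\]

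Putting these together, if $n\langle\epsilon,p\rangle$ is null-homotopic then so is $n\,q_*\langle\epsilon,p\rangle=n\langle\bar{\epsilon},\bar{q}\circ p_2\rangle$, and restricting along $1_{S^1}\wedge i_2$ yields $n\langle\bar{\epsilon},\bar{q}\rangle\simeq *$. This gives the claimed divisibility. There is no real obstacle here beyond verifying the identification $q\circ p=\bar{q}\circ p_2$ carefully; once naturality of the Samelson product under pre- and post-composition with homomorphisms is invoked, the proof is essentially a diagram chase.
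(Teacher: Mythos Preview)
Your proof is correct and follows essentially the same route as the paper's: both push forward along the homomorphism $q$ and restrict along the inclusion $i\colon H\hookrightarrow S^1\times H$, using naturality of the Samelson product to obtain $(1\wedge i)^*\circ q_*\langle\epsilon,p\rangle=\langle\bar\epsilon,\bar q\rangle$. The only difference is cosmetic: you make the global identification $q\circ p=\bar q\circ p_2$ explicit before restricting, whereas the paper records directly that $q\circ p\circ i=\bar q$.
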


  \begin{proof}
    Let $i\colon H\to S^1\times H$ denote the inclusion. By definition, $q\circ p\circ i=\bar{q}$, and the proof of Lemma \ref{C} implies that $q\circ\epsilon=\bar{\epsilon}$. Then
    \[
      (1\wedge i)^*\circ q_*(\langle\epsilon,p\rangle)=q_*(\langle\epsilon,p\circ i\rangle)=\langle q\circ\epsilon,q\circ p\circ i\rangle=\langle\bar{\epsilon},\bar{q}\rangle
    \]
    and so the proof is done.
  \end{proof}

  Finally, we give a cohomological criterion for the Samelson product $\langle\bar{\epsilon},\bar{q}\rangle$ being non-trivial. For an algebra $A$, let $QA$ denote the module of indecomposables.

  \begin{lemma}
    \label{Steenrod operation}
    Suppose there are $x,y,z\in QH^*(BK;\Z/p)$ and a Steenrod operation $\theta$ satisfying the following conditions:
    \begin{enumerate}
      \item $|y|=2$ and $QH^n(BK;\Z/p)=\langle z\rangle$ for $n>2$;

      \item $\theta(x)$ is decomposable and includes the term $y\otimes z$;

      \item $(\bar{q}\circ j)^*(z)$ is non-trivial and not included in any element of $\theta(H^*(\Sigma H;\Z/p))$.
    \end{enumerate}
    Then the Samelson product $\langle\bar{\epsilon},\bar{q}\rangle$ is non-trivial.
  \end{lemma}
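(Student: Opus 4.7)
The plan is to reformulate the claim as non-vanishing of a Whitehead product in $BK$, then obstruct the vanishing cohomologically via the Steenrod operation $\theta$.

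First, the standard Samelson--Whitehead adjunction (via $K\simeq\Omega BK$) identifies $\langle\bar{\epsilon},\bar{q}\rangle\colon\Sigma H\to K$ with the Whitehead product $[\tilde{\bar{\epsilon}},B\bar{q}\circ j]\colon\Sigma^2 H\to BK$, where $\tilde{\bar{\epsilon}}\colon S^2\to BK$ is the adjoint of $\bar{\epsilon}$. Hence $\langle\bar{\epsilon},\bar{q}\rangle=0$ would produce an extension $\phi\colon S^2\times\Sigma H\to BK$ of $\tilde{\bar{\epsilon}}\vee(B\bar{q}\circ j)\colon S^2\vee\Sigma H\to BK$; I assume such a $\phi$ exists, aiming at a contradiction.

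Next, I read off $\phi^*$ on $y$ and $z$. Let $a$ generate $H^2(S^2;\Z/p)$. Simple-connectivity of $H$ forces $H^2(\Sigma H;\Z/p)=0$, while $\tilde{\bar{\epsilon}}$ classifies a generator of $\pi_2(BK)\cong p_2(C)$ dual to $y$; so after a normalization,
\[
\phi^*(y)=a\otimes 1,\qquad \phi^*(z)=1\otimes\zeta+a\otimes w,
\]
where $\zeta=(B\bar{q}\circ j)^*(z)$ is non-zero by (3) and $w\in H^{|z|-2}(\Sigma H;\Z/p)$.

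Now I compute $\phi^*\theta(x)$ in two ways. Since $a^2=0$ and $\tilde H^*(\Sigma H;\Z/p)$ has trivial cup products, one gets $\phi^*(yz)=a\otimes\zeta$. By (1), any decomposable of degree $|y|+|z|$ in $H^*(BK;\Z/p)$ is a polynomial in $y$ and $z$, and a degree count limits the monomials to $yz$ and pure powers $y^k$ with $k\ge 2$; as $\phi^*(y^k)=a^k\otimes 1=0$ for $k\ge 2$, hypothesis (2) yields
\[
\phi^*\theta(x)\equiv c\,a\otimes\zeta\pmod{1\otimes H^*(\Sigma H;\Z/p)}
\]
for some non-zero $c\in\Z/p$. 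On the other hand, writing $\phi^*(x)=1\otimes\beta+a\otimes\gamma$, the fact $\theta(a)=0$ in $H^*(S^2;\Z/p)$ collapses the Cartan formula to $\theta(\phi^*(x))=1\otimes\theta(\beta)+a\otimes\theta(\gamma)$. Naturality $\phi^*\theta(x)=\theta\phi^*(x)$ then forces $c\,\zeta=\theta(\gamma)$, so $\zeta\in\theta(H^*(\Sigma H;\Z/p))$, contradicting (3).

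The main obstacle is the enumeration of decomposables in degree $|y|+|z|$: one needs hypothesis (1) to reduce the contributing terms to $yz$, after which the $a^2=0$ and suspension-cup-product arguments are routine applications of K\"unneth and the Cartan formula.
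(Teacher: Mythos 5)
Your proof is correct and follows essentially the same route as the paper's: adjoint the Samelson product to a Whitehead product, extend the wedge map over $S^2\times\Sigma H$, and contradict naturality of $\theta$ using the K\"unneth and Cartan formulas together with condition (3). The paper's version is terser but identical in substance; your explicit enumeration of the decomposables in degree $|y|+|z|$ (reducing to $yz$ and powers of $y$, which die under $\phi^*$) just fills in the step the paper leaves implicit.
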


  \begin{proof}
    Suppose that $\langle\bar{\epsilon},\bar{q}\rangle$ is trivial. Let $\hat{\epsilon}\colon S^2\to BK$ and $\hat{q}\colon\Sigma H\to BK$ denote the adjoint of $\bar{\epsilon}$ and $\bar{q}$, respectively. Then by adjointness of Samelson products and Whitehead products, the Whitehead product $[\hat{\epsilon},\hat{q}]$ is trivial, so that there is a homotopy commutative diagram
    \[
      \xymatrix{
        S^2\vee\Sigma H\ar[r]^(.6){\hat{\epsilon}\vee\hat{q}}\ar[d]&BK\ar@{=}[d]\\
        S^2\times\Sigma H\ar[r]^(.6)\mu&BK.
      }
    \]
    By the first condition, $\hat{\epsilon}^*(y)=u$, where $u$ is a generator of $H^2(S^2;\Z/p)\cong\Z/p$. Then by the first and the second conditions, $\mu^*(\theta(x))$ includes the term $u\otimes\hat{q}^*(z)$. Since $\hat{q}=\bar{q}\circ j$, the third condition implies $u\otimes\hat{q}^*(z)\ne 0$. On the other hand, by the third condition, $\theta(\mu^*(x))$ cannot include the term $u\otimes\hat{q}^*(z)$. Thus since $\mu^*(\theta(x))=\theta(\mu^*(x))$, we obtain a contradiction. Therefore $\langle\bar{\epsilon},\bar{q}\rangle$ is non-trivial, completing the proof.
  \end{proof}

  Recall that compact simply-connected simple Lie groups with non-trivial center are
  \[
    SU(n),\quad Sp(n),\quad Spin(n)\quad(n\ge 7),\quad E_6,\quad E_7.
  \]
  Then in the following two sections, we will compute the Samelson product $\langle\epsilon,p\rangle$ for $H$ being one of the above Lie groups.


  \section{Classical case}\label{Classical case}

  This section determines the order of the Samelson product $\langle\epsilon,p\rangle$ for $H=SU(n),Sp(n),Spin(n)$.


  \subsection{The case $H=SU(n)$}

  First we consider the case $H=SU(n)$.

  \begin{proposition}
    \label{SU(n)}
    If $H=SU(n)$, then $\langle\epsilon,p\rangle$ is of order $s(G)$.
  \end{proposition}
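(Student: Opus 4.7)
Combining Lemmas~\ref{upper bound} and~\ref{lower bound}, the order of $\langle\epsilon,p\rangle$ divides $s(G)$ and is a multiple of the order of $\langle\bar\epsilon,\bar q\rangle$ in $K=SU(n)/\Z_{s(G)}$, so it suffices to prove that $\langle\bar\epsilon,\bar q\rangle$ has order at least $s(G)$.

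I would argue one prime at a time. For a prime $p$ dividing $s(G)$, write $e=v_p(s(G))$ and let $K'=SU(n)/\Z_{p^e}$, where $\Z_{p^e}$ is the $p$-primary subgroup of $\Z_{s(G)}$. The induced projection $K\to K'$ sends a generator of $\pi_1(K)=\Z_{s(G)}$ to $(s(G)/p^e)$ times a generator of $\pi_1(K')=\Z_{p^e}$, which is itself a generator since $s(G)/p^e$ is coprime to $p$. By naturality of Samelson products it is then enough to show that the analogous Samelson product $\langle\bar\epsilon_{K'},\bar q_{K'}\rangle$ has order exactly $p^e$ in $K'$.

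For this I would invoke Lemma~\ref{Steenrod operation}. The Serre spectral sequence of the fibration $BSU(n)\to BK'\to K(\Z_{p^e},2)$ exhibits a two-dimensional indecomposable $y\in H^2(BK';\Z/p)$ pulled back from the base, together with lifts $\tilde c_2,\ldots,\tilde c_n$ of the Chern classes of $BSU(n)$. The Wu formula computes $P^1c_i$ in $H^*(BSU(n);\Z/p)$ as an explicit polynomial in the $c_j$'s; lifting this identity to $H^*(BK';\Z/p)$ forces a correction lying in the ideal generated by $y$, and for a well-chosen $i$ this correction contains a cross term $y\otimes\tilde c_j$ with $\tilde c_j$ of positive degree. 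The restriction $(\bar q_{K'}\circ j)^*(\tilde c_j)$ is a nonzero multiple of the suspension of $c_j$, and since cup products vanish on a suspension, $P^1$ applied to any class in $H^*(\Sigma SU(n);\Z/p)$ cannot produce this cross term. Lemma~\ref{Steenrod operation} then yields the required mod-$p$ non-triviality.

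\textbf{The main obstacle} will be upgrading this mod-$p$ non-triviality to the sharp lower bound $p^e$, since Lemma~\ref{Steenrod operation} alone only detects the presence of $p$-torsion in the order. I expect this to be handled by iterating the argument using higher Steenrod-like operations attached to the Bockstein tower on $y$ (whose height equals $e$), checking at each stage that the Wu-type expansion continues to produce a nonzero $y\otimes\tilde c_j$ contribution whose restriction to $\Sigma SU(n)$ is nonzero.
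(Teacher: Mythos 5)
There is a genuine gap, and you have already put your finger on it: Lemma~\ref{Steenrod operation} only ever certifies that $\langle\bar{\epsilon},\bar{q}\rangle$ is \emph{non-trivial}, so running it (or any primary mod~$p$ operation argument) at each prime $p\mid s(G)$ can at best show that the order of $\langle\epsilon,p\rangle$ is divisible by the squarefree radical of $s(G)$, not by $s(G)$ itself. Already for $G=U(4)$, where $s(G)=4$, your argument would only yield divisibility by $2$. The proposed rescue via ``higher Steenrod-like operations attached to the Bockstein tower'' is not an argument: no such machinery is set up in the paper, Lemma~\ref{Steenrod operation} has no analogue for detecting that $p^{e-1}\langle\bar{\epsilon},\bar{q}\rangle\neq 0$, and it is far from clear that primary operations can see this at all. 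There is also a smaller slip in the reduction: with $\Z_{p^e}\subset\Z_{s(G)}$ central, the natural homomorphism goes $K'=SU(n)/\Z_{p^e}\to K=SU(n)/\Z_{s(G)}$, not the other way; pushing a Samelson product forward along this map only bounds its order from above, so the reduction as stated does not transfer a lower bound from $K'$ to $K$ (this piece is salvageable, as in Corollary~\ref{PO(4n+2)}, because $[S^1\wedge SU(n),K']\to[S^1\wedge SU(n),K]$ is a bijection for the simply-connected source, but you would need to say so).

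The paper takes a completely different and sharper route that avoids cohomology operations altogether: since the center of $SU(n)$ is $\Z/n$, Lemma~\ref{C} produces an $\frac{n}{s(G)}$-sheeted covering $\rho\colon G\to U(n)$ with $\rho_*(\epsilon)=\frac{n}{s(G)}\alpha_1$, and Bott's theorem gives that $\langle\alpha_1,\alpha_{2n-1}\rangle$ has order a non-zero multiple of $n$ in $\pi_{2n}(U(n))\cong\Z/n!$. Hence $\rho_*\langle\epsilon,\tilde{\alpha}\rangle=\frac{n}{s(G)}\langle\alpha_1,\alpha_{2n-1}\rangle$ has order a multiple of $s(G)$, and since $\rho_*$ is an isomorphism on $\pi_{2n}$, so does $\langle\epsilon,\tilde{\alpha}\rangle$, which pulls back from $\langle\epsilon,p\rangle$. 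This yields the exact lower bound $s(G)$ (including all prime powers) in one stroke; to repair your approach you would need an input of comparable strength, and the natural one is precisely Bott's computation.
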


  \begin{proof}
    By Lemma \ref{upper bound}, it suffices to show that the order of $\langle\epsilon,p\rangle$ is a non-zero multiple of $s(G)$. The center of $SU(n)$ is isomorphic to $\Z/n$. Then since $U(n)=S^1\times_{\Z/n}SU(n)$, it follows from Lemma \ref{C} that there is a homomorphism $\rho\colon G\to U(n)$ which is a $\frac{n}{s(G)}$ sheeted covering. Let $\alpha_{2i-1}$ denote a generator of $\pi_{2i-1}(U(n))\cong\Z$ for $i=1,2,\ldots,n$. Then
    \[
      \rho_*(\epsilon)=\frac{n}{s(G)}\alpha_1.
    \]
    On the other hand, it is shown in \cite{Bo} that the order of $\langle\alpha_1,\alpha_{2n-1}\rangle$ is a non-zero multiple of $n$. Since $\rho_*\colon\pi_{2n-1}(G)\to\pi_{2n-1}(U(n))$ is an isomorphism, there is $\tilde{\alpha}\in\pi_{2n-1}(G)$ such that $\rho_*(\tilde{\alpha})=\alpha_{2n-1}$. Then since
    \[
      \rho_*(\langle\epsilon,\tilde{\alpha}\rangle)=\langle\rho_*(\epsilon),\rho_*(\tilde{\alpha})\rangle=\langle\frac{n}{s(G)}\alpha_1,\alpha_{2n-1}\rangle=\frac{n}{s(G)}\langle\alpha_1,\alpha_{2n-1}\rangle,
    \]
    the order of $\rho_*(\langle\epsilon,\tilde{\alpha}\rangle)$ is a non-zero multiple of $s(G)$. Thus since the map $\rho_*\colon\pi_{2n}(G)\to\pi_{2n}(U(n))$ is an isomorphism, the order of $\langle\epsilon,\tilde{\alpha}\rangle$ is a non-zero multiple of $s(G)$ too. Since $p_*\colon\pi_{2n-1}(S^1\times SU(n))\to\pi_{2n-1}(G)$ is an isomorphism, there is $\beta\in\pi_{2n-1}(S^1\times SU(n))$ such that $p\circ\beta=\tilde{\alpha}$. Thus since $(1\wedge \beta)^*(\langle\epsilon,p\rangle)=\langle\epsilon,\tilde{\alpha}\rangle$, the order of $\langle\epsilon,p\rangle$ is a non-zero multiple of $s(G)$, completing the proof.
  \end{proof}


  \subsection{The case $H=Sp(n)$}

  Next, we consider the case $H=Sp(n)$. Recall that the center of $Sp(n)$ is isomorphic to $\Z/2$, and the quotient of $Sp(n)$ by its center is denoted by $PSp(n)$. We apply Lemma \ref{Steenrod operation} to the case $H=Sp(n)$. To this end, we compute the mod 2 cohomology of $BPSp(2n)$ in low dimensions.

  \begin{lemma}
    \label{diagonal Sp(2)}
    Let $\Delta=\{\pm(1,\ldots,1)\in Sp(2)^n\}$. Then for $*\le 7$
    \[
      H^*(B(Sp(2)^n/\Delta);\Z/2)=\Z/2[x_2,x_3,x_5]\otimes\bigotimes_{k=1}^n\Z/2[x_{4,k}],\quad Sq^2x_{4,k}=x_2x_{4,k}
    \]
    where $|x_i|=i$ and $|x_{4,k}|=4$.
  \end{lemma}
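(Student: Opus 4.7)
The plan is to analyze the Serre spectral sequence of the fibration
\[
BSp(2)^n\longrightarrow B(Sp(2)^n/\Delta)\longrightarrow B^2\Delta=K(\Z/2,2)
\]
coming from the central extension $1\to\Delta\to Sp(2)^n\to Sp(2)^n/\Delta\to 1$. Since $H^*(BSp(2);\Z/2)=\Z/2[q_1,q_2]$ with $|q_1|=4$ and $|q_2|=8$, the fiber contributes only $q_{1,1},\ldots,q_{1,n}$ in total degree $\le 7$, while $H^*(K(\Z/2,2);\Z/2)$ is freely generated through that range by $\iota_2$, $Sq^1\iota_2$, $Sq^2Sq^1\iota_2$. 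Because $H^{\mathrm{odd}}(BSp(2)^n;\Z/2)=0$, the only differential possibly affecting $q_{1,k}$ is the transgression $d_5(q_{1,k})\in H^5(K(\Z/2,2);\Z/2)$.

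The key step is to show $d_5(q_{1,k})=0$. I would apply naturality with respect to the $k$-th projection $p_k\colon Sp(2)^n\to Sp(2)$, which restricts to an isomorphism $\Delta\xrightarrow{\cong}Z(Sp(2))$ and hence induces a morphism from the above spectral sequence to that of
\[
BSp(2)\longrightarrow BPSp(2)\longrightarrow K(\Z/2,2).
\]
Using $Sp(2)=Spin(5)$, so $PSp(2)=SO(5)$, we have $H^*(BSO(5);\Z/2)=\Z/2[w_2,w_3,w_4,w_5]$, whose total dimension in each degree $\le 7$ matches the $E_2$-total of the target spectral sequence; this forces collapse at $E_2$, and in particular $\tau(q_1)=0$. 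By naturality $\tau(q_{1,k})=p_k^*\tau(q_1)=0$ for every $k$.

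With the spectral sequence collapsing through total degree $7$, a Leray--Hirsch-type argument identifies $H^*(B(Sp(2)^n/\Delta);\Z/2)$ with $H^*(K(\Z/2,2);\Z/2)\otimes\bigotimes_{k=1}^n\Z/2[q_{1,k}]$ in that range. To extract the claimed presentation, I would take $x_2$, $x_3$, $x_5$ as the pullbacks of $\iota_2$, $Sq^1\iota_2$, $Sq^2Sq^1\iota_2$ from the base, and set $x_{4,k}:=(B\bar{p}_k)^*(w_4)$, where $\bar{p}_k\colon Sp(2)^n/\Delta\to SO(5)$ is the descended projection. The composite $BSp(2)^n\to B(Sp(2)^n/\Delta)\xrightarrow{B\bar{p}_k}BSO(5)$ equals $B(\pi\circ p_k)$, and $\pi^*(w_4)=q_1$ under $\pi\colon Spin(5)\to SO(5)$, so $x_{4,k}$ restricts in the fiber to $q_{1,k}$. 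Since $H^2(B(Sp(2)^n/\Delta);\Z/2)\cong\Z/2$, we have $(B\bar{p}_k)^*(w_2)=x_2$, and pulling the Wu formula $Sq^2w_4=w_2w_4$ back along $B\bar{p}_k$ yields $Sq^2x_{4,k}=x_2x_{4,k}$. The hardest step is the transgression vanishing, which my plan reduces to the $n=1$ case where it is forced by a rank count against $H^*(BSO(5);\Z/2)$; everything else—identifying the named generators and invoking the Wu formula through naturality—is bookkeeping.
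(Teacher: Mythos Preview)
Your argument is correct but uses a different fibration from the paper. The paper runs the Serre spectral sequence for
\[
\R P^\infty \longrightarrow BSp(2)^n \longrightarrow B(Sp(2)^n/\Delta),
\]
i.e.\ the unknown space sits in the \emph{base}. There the transgression $\tau(w)=x_2$ is forced because $H^1(BSp(2)^n;\Z/2)=0$, and Kudo's theorem then produces $x_3=Sq^1x_2$ and $x_5=Sq^2Sq^1x_2$; no separate vanishing argument is needed. You instead use the next stage of the same Puppe sequence,
\[
BSp(2)^n \longrightarrow B(Sp(2)^n/\Delta) \longrightarrow K(\Z/2,2),
\]
placing the unknown in the total space. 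This makes the additive answer fall out of Leray--Hirsch once you know the fiber classes survive, but it costs you the extra step of proving $d_5(q_{1,k})=0$, which you handle cleanly by naturality and a rank count against $H^*(BSO(5);\Z/2)$ in the $n=1$ case. Both proofs finish the $Sq^2$ computation the same way, by pulling back $Sq^2w_4=w_2w_4$ along the descended projection $Sp(2)^n/\Delta\to PSp(2)\cong SO(5)$. Your route is perhaps more transparent about where the generators $x_2,x_3,x_5$ come from (they are literally the Serre classes $\iota_2,Sq^1\iota_2,Sq^2Sq^1\iota_2$ of $K(\Z/2,2)$), while the paper's route avoids the auxiliary collapse argument.
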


  \begin{proof}
    Consider the Serre spectral sequence for a homotopy fibration $\R P^\infty\to BSp(2)^n\to B(Sp(2)^n/\Delta)$. Since $H^*(\R P^\infty;\Z/2)=\Z/2[w]$ with $|w|=1$,
    \[
      H^*(\R P^\infty;\Z/2)=\Delta(w,Sq^1w,Sq^2Sq^1w).
    \]
    for $*\le 7$, where $\Delta(a_1,\ldots,a_k)$ denotes the simple system of generators in $a_1,\ldots,a_k$. Clearly, $\tau(w)=x_2$ for a generator $x_2$ of $H^2(B(Sp(2)^n/\Delta);\Z/2)\cong\Z/2$, where $\tau$ denotes the transgression. Then we get $H^*(B(Sp(2)^n/\Delta);\Z/2)$ for $*\le 7$ as stated. It remains to show $Sq^2x_{4,k}=x_2x_{4,k}$. Recall that
    \begin{equation}
      \label{BSO(n)}
      H^*(BSO(n);\Z/2)=\Z/2[w_2,w_3,\ldots,w_n],\quad Sq^iw_j=\sum_{k=0}^i\binom{j+k-i-1}{k}w_{i-k}w_{j+k}
    \end{equation}
    where $w_i$ is the $i$-th Stiefel-Whitney class. Then since $PSp(2)\cong SO(5)$,
    \[
      H^*(BPSp(2);\Z/2)=\Z/2[y_2,y_3,y_4,y_5],\quad Sq^2y_4=y_2y_4
    \]
    where $|y_i|=i$. Let $q_k\colon Sp(2)^n\to Sp(2)$ denote the $k$-th projection for $k=1,2,\ldots,n$. Then $q_k^*(y_2)=x_2$ and $q_k^*(y_4)=x_{4,k}$. Thus we obtain $Sq^2x_{4,k}=x_2x_{4,k}$, completing the proof.
  \end{proof}

  \begin{proposition}
    \label{PSp(2n)}
    For $*\le 7$,
    \[
      H^*(BPSp(n);\Z/2)=\Z/2[x_2,x_3,x_4,x_5],\quad Sq^2x_4=x_4x_2,\quad|x_i|=i.
    \]
  \end{proposition}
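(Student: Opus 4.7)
The plan is to use the Serre spectral sequence of the double-cover fibration
\[
\R P^\infty \to BSp(n) \to BPSp(n)
\]
to determine $H^*(BPSp(n);\Z/2)$ additively in degrees $\le 7$, and then to identify $Sq^2 x_4$ by naturality under the map $\phi\colon Sp(2)^{n/2}/\Delta \to PSp(n)$ induced by the block diagonal inclusion (here $n$ is understood to be even, so $\Delta$ maps to the center of $Sp(n)$).

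First I would pin down the additive structure. The class $w \in H^1(\R P^\infty;\Z/2)$ transgresses to a generator $x_2$ of $H^2(BPSp(n);\Z/2) \cong \Z/2$, and Kudo's transgression theorem applied to the simple system of generators $w,\ Sq^1 w = w^2,\ Sq^2 Sq^1 w = w^4$ of $H^*(\R P^\infty;\Z/2)$ in degrees $\le 7$ produces $x_3 = Sq^1 x_2$ and $x_5 = Sq^2 x_3$. For degree $4$, the key observation is that since $n$ is even, the Pontryagin class $q_1 \in H^4(BSp(n);\Z/2)$ restricts trivially to the fiber: the restriction equals $q_1$ of $n$ copies of the sign quaternionic representation of $\{\pm 1\} \subset Sp(n)$, i.e.\ $n \cdot w^4 \equiv 0 \pmod 2$. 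Hence $q_1 = \pi^*(x_4)$ for some $x_4 \in H^4(BPSp(n);\Z/2)$. A dimension count against $H^*(BSp(n);\Z/2) = \Z/2[q_1,\ldots,q_n]$ (concentrated in degrees $\equiv 0 \pmod 4$) in degrees $\le 7$ then confirms $H^*(BPSp(n);\Z/2) = \Z/2[x_2,x_3,x_4,x_5]$ in that range.

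For the Steenrod operation I would invoke naturality under $\phi$: $\phi^*(x_2) = x_2$ by naturality of transgression, and since the block diagonal sends $q_1$ to $\sum_k q_1^{(k)}$, one can choose $x_4$ so that $\phi^*(x_4) = \sum_k x_{4,k}$ (the ambiguity $x_4 \mapsto x_4 + c x_2^2$ being fixed by this choice). Applying $Sq^2$ and using $Sq^2 x_{4,k} = x_2 x_{4,k}$ from Lemma \ref{diagonal Sp(2)}, we obtain
\[
\phi^*(Sq^2 x_4) = \sum_k x_2 x_{4,k} = \phi^*(x_2 x_4).
\]
The images $x_2^3,\ x_3^2,\ \sum_k x_2 x_{4,k}$ of the degree-$6$ basis $x_2^3, x_3^2, x_2 x_4$ of $H^6(BPSp(n);\Z/2)$ are linearly independent in $H^6(B(Sp(2)^{n/2}/\Delta);\Z/2)$ by Lemma \ref{diagonal Sp(2)}, so $\phi^*$ is injective on this subspace, forcing $Sq^2 x_4 = x_2 x_4$.

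The main obstacle is the dimension count establishing the polynomial structure and the non-triviality of $x_3, x_5$: one must carefully track the differentials $d_2, d_3, d_5$ on classes $a \cdot w^q$ in the spectral sequence, verifying both that no extraneous generators appear in degrees $\le 7$ and that the four identified classes are algebraically independent. This is analogous to (and simpler than) the corresponding step in the proof of Lemma \ref{diagonal Sp(2)}.
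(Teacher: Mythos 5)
Your proof is correct and takes essentially the same route as the paper's: the paper likewise computes $H^{*}(BPSp(2n);\Z/2)$ for $*\le 7$ via the Serre spectral sequence of $\R P^\infty\to BSp(2n)\to BPSp(2n)$ (exactly as in Lemma \ref{diagonal Sp(2)}) and then determines $Sq^2x_4$ by naturality along the block-diagonal inclusion $Sp(2)^n/\Delta\to PSp(2n)$. The extra details you supply (Kudo's theorem, identifying $x_4$ as a lift of $q_1$ using that the number of symplectic factors is even, and the injectivity of the restriction in degree $6$) are precisely the steps the paper leaves implicit.
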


  \begin{proof}
    We can compute the mod 2 cohomology of $BPSp(2n)$ in the same way as in the proof of Lemma \ref{diagonal Sp(2)} by considering a homotopy fibration $\R P^\infty\to BSp(2n)\to BPSp(2n)$. Then it remains to show $Sq^2x_4=x_4x_2$. Let $\Delta$ be as in Lemma \ref{diagonal Sp(2)}. Then there is an inclusion $i\colon Sp(2)^n/\Delta\to PSp(2n)$. Clearly, $i^*(x_2)=x_2$ and $i^*(x_4)=x_{4,1}+\cdots+x_{4,n}$. Then we obtain $Sq^2x_4=x_4x_2$ by Lemma \ref{diagonal Sp(2)}.
  \end{proof}

  Now we prove:

  \begin{proposition}
    \label{Sp(n)}
    If $H=Sp(n)$, then $\langle\epsilon,p\rangle$ is of order $s(G)$.
  \end{proposition}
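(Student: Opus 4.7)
The plan is to argue based on the value of $s(G) \in \{1,2\}$, the set of possible orders given that the center of $Sp(n)$ is $\Z/2$. By Lemma \ref{upper bound} the order of $\langle \epsilon, p \rangle$ divides $s(G)$, so when $s(G) = 1$ the claim is immediate. When $s(G) = 2$, it remains to prove $\langle \epsilon, p \rangle$ is non-trivial, and by Lemma \ref{lower bound} this reduces to showing that $\langle \bar{\epsilon}, \bar{q} \rangle$ is non-trivial in $K = PSp(n)$, which I propose to detect at the prime $2$.

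For this I would invoke Lemma \ref{Steenrod operation} with $p = 2$, $x = z = x_4$, $y = x_2$, and $\theta = Sq^2$, reading off the cohomological data from Proposition \ref{PSp(2n)}. Conditions (1) and (2) are then immediate in the relevant degree: $|x_2| = 2$, $QH^4(BPSp(n); \Z/2) = \langle x_4 \rangle$, and the formula $Sq^2 x_4 = x_2 x_4$ exhibits the required term $y \otimes z$ in the decomposable part of $\theta(x)$.

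The core of the proof is the verification of condition (3), namely that $(\bar{q} \circ j)^*(x_4) \in H^4(\Sigma Sp(n); \Z/2)$ is non-zero and is not a summand of any $Sq^2$-image. The second clause is automatic: $Sp(n)$ is $2$-connected, so $H^2(\Sigma Sp(n); \Z/2) = 0$ leaves no source for $Sq^2$ into degree $4$. For non-triviality I would run the Serre spectral sequence of the fibration $\R P^\infty \to BSp(n) \to BPSp(n)$, whose transgression satisfies $d_2(w) = x_2$ as in the proof of Lemma \ref{diagonal Sp(2)}. In bidegree $(4,0)$ the class $x_2^2$ is hit by $d_2(x_2 \otimes w) = x_2^2$, while $x_4$ receives no non-zero differential, since the only potential source $w^3 \in E_2^{0,3}$ is already killed by $d_2(w^3) = x_2 w^2 \ne 0$. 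Hence $E_\infty^{4,0} = \langle x_4 \rangle$, which forces $(B\bar{q})^*(x_4)$ to equal the generator $q_1$ of $H^4(BSp(n); \Z/2)$. Composing with $j_H^*$, which sends $q_1$ to the non-zero suspension of the degree-$3$ generator of $H^*(Sp(n); \Z/2)$, yields the required non-triviality of $(\bar{q}\circ j)^*(x_4)$.

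I expect the spectral sequence bookkeeping in degree $\leq 4$ to be the only non-trivial step, but since the relevant low-dimensional information is in spirit already contained in the proof of Proposition \ref{PSp(2n)}, it should not present a serious obstacle.
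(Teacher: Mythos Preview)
Your argument is correct for even $n$ and there it agrees with the paper's, but there is a genuine gap when $n$ is odd. The statement of Proposition~\ref{PSp(2n)} contains a typo: despite reading ``$BPSp(n)$'', the label and the proof make clear it is established only for $BPSp(2n)$, and in fact the assertion is \emph{false} for odd $n$. Concretely, $QH^4(BPSp(n);\Z/2)=0$ when $n$ is odd, so there is no class $x_4$ available to feed into Lemma~\ref{Steenrod operation}. One clean way to see the parity obstruction is to use the diagonal $Sp(1)\to Sp(n)$, which descends to $SO(3)\to PSp(n)$ and sends $q_1\mapsto n\,q_1$; comparing the Serre spectral sequences of $BSp(\,\cdot\,)\to BPSp(\,\cdot\,)\to K(\Z/2,2)$ then shows that the transgression of $q_1$ is $n$ times the (nonzero) transgression for $n=1$, hence $q_1$ fails to lie in the image of $(B\bar q)^*$ exactly when $n$ is odd. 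Your own spectral-sequence bookkeeping would expose this: for odd $n$ one gets $E_\infty^{4,0}=0$ rather than $\langle x_4\rangle$, the class $q_1\in H^4(BSp(n);\Z/2)$ being detected instead by $E_\infty^{0,4}=\langle w^4\rangle$.

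The paper handles the odd case by an entirely different mechanism: the inclusion $Sp(2m-1)\hookrightarrow SU(4m-2)$ carries the center injectively, giving a homomorphism $G\to S^1\times_{\Z/2}SU(4m-2)$ which is an isomorphism on $\pi_1$ and on $\pi_{8m-5}$; one then reads off nontriviality of $\langle\epsilon,p\rangle$ from the already-established $SU$ case (Proposition~\ref{SU(n)}). You need this reduction, or some substitute, to close the odd-rank gap.
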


  \begin{proof}
    Since the center of $Sp(n)$ is isomorphic to $\Z/2$, we only consider $G=S^1\times_{\Z/2}Sp(n)$. In this case, $s(G)=2$, so by Lemmas \ref{upper bound}, it suffices to show $\langle\epsilon,p\rangle$ is non-trivial. First, we consider the case $G=S^1\times_{\Z/2}Sp(2n-1)$. The natural inclusion $Sp(2n-1)\to SU(4n-2)$ sends the center of $Sp(2n-1)$ injectively into the center of $SU(4n-2)$. Then we get a homomorphism $G\to S^1\times_{\Z/2}SU(4n-2)$ which is an isomorphism in $\pi_1$. It is well known that the induced map $\pi_{8n-5}(Sp(2n-1))\to\pi_{8n-5}(SU(4n-2))$ is an isomorphism, hence so is $\pi_{8n-5}(G)\to\pi_{8n-5}(S^1\times_{\Z/2}SU(4n-2))$. Then the proof of Proposition \ref{SU(n)} implies that the Samelson product $\langle\epsilon,p\rangle$ is non-trivial.

    Next, we consider $G=S^1\times_{\Z/2}Sp(2n)$. We apply Lemma \ref{Steenrod operation} to $K=PSp(2n)$ by setting $x=z=x_4,\,y=x_2$ and $\theta=Sq^2$. By Proposition \ref{PSp(2n)}, the first and the second conditions of Lemma \ref{Steenrod operation} are satisfied. The proof of Proposition \ref{PSp(2n)} implies $\bar{q}^*(x_4)$ is non-trivial, where $H^4(BSp(2n);\Z/2)\cong QH^4(BSp(2n);\Z/2)\cong \Z/2$. Since the map
    \[
      j^*\colon QH^4(BSp(2n);\Z/2)\to \Sigma QH^3(Sp(2n);\Z/2)
    \]
    is an isomorphism, we have $(\bar{q}\circ j)^*(x_4)\ne 0$. Moreover, for degree reasons, $(\bar{q}\circ j)^*(x_4)$ is not included in any element of $\theta(H^*(\Sigma Sp(2n);\Z/2))$. Then the third condition of Lemma \ref{Steenrod operation} is also satisfied. Thus $\langle\bar{\epsilon},\bar{q}\rangle$ is non-trivial, and so by Lemma \ref{lower bound}, $\langle\epsilon,p\rangle$ is non-trivial too.
  \end{proof}


  \subsection{The case $H=Spin(n)$}

  Finally, we consider the case $H=Spin(n)$. We show some properties of the mod 2 cohomology of $BSpin(n)$ that we are going to use. Recall that the mod 2 cohomology of $BSO(n)$ is given as in \eqref{BSO(n)}.

  \begin{lemma}
    \label{Spin(n) cohomology}
    \begin{enumerate}
      \item The mod 2 cohomology of $BSpin(n)$ is given by
      \[
        H^*(BSpin(n);\Z/2)=\Z/2[u_2,u_3,\ldots,u_n,z]/(u_2,Sq^{2^k}Sq^{2^{k-1}}\cdots Sq^1u_2\mid k\ge 0)
      \]
      where $\bar{q}_{SO(n)}^*(w_j)=u_j$, $|z|=2^h$ for some $h>0$ and $Sq^iu_j$ is computed by replacing $w_j$ with $u_j$ in the formula \eqref{BSO(n)}.

      \item For $2\le i\le n$ with $i\ne 2^k+1$, $j_{Spin(n)}^*(u_i)\ne 0$.
    \end{enumerate}
  \end{lemma}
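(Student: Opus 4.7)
My approach is to treat the two parts separately. Part (1) is essentially Quillen's computation of $H^*(BSpin(n);\Z/2)$. I would apply the mod $2$ Serre spectral sequence to the principal $\R P^\infty$-fibration
\[
\R P^\infty \to BSpin(n) \xrightarrow{\bar{q}_{SO(n)}} BSO(n)
\]
classified by $w_2$. Writing $H^*(\R P^\infty;\Z/2) = \Z/2[w]$ with transgression $\tau(w) = w_2$, Kudo's transgression theorem yields $\tau(w^{2^k}) = Sq^{2^{k-1}} \cdots Sq^1 w_2$, which imposes exactly the stated relations on the quotient of $H^*(BSO(n);\Z/2)$. The extra polynomial generator $z$ of degree $2^h$ appears as the first power $w^{2^h}$ whose transgression already lies in the ideal generated by the preceding ones, making it a permanent cycle. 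The main technical obstacle is pinning down the precise $h$, which depends on $n \pmod 8$; for this I would rely on Quillen's paper.

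For part (2), naturality of $j_G \colon \Sigma G \to BG$ applied to $\bar{q}_{SO(n)} \colon Spin(n) \to SO(n)$ yields a homotopy commutative square
\[
\xymatrix{
\Sigma Spin(n) \ar[r]^-{j_{Spin(n)}} \ar[d]_-{\Sigma \bar{q}_{SO(n)}} & BSpin(n) \ar[d]^-{\bar{q}_{SO(n)}} \\
\Sigma SO(n) \ar[r]^-{j_{SO(n)}} & BSO(n)
}
\]
from which $j_{Spin(n)}^*(u_i) = (\Sigma \bar{q}_{SO(n)})^* j_{SO(n)}^*(w_i)$. Under the suspension isomorphism $\tilde{H}^i(\Sigma X;\Z/2) \cong \tilde{H}^{i-1}(X;\Z/2)$, the class $j_{SO(n)}^*(w_i)$ corresponds to the cohomology suspension $\sigma w_i \in H^{i-1}(SO(n);\Z/2)$, and $j_{Spin(n)}^*(u_i)$ corresponds to $\bar{q}_{SO(n)}^*(\sigma w_i)$. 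It therefore suffices to show that $\bar{q}_{SO(n)}^*(\sigma w_i) \neq 0$ exactly when $i - 1$ is not a power of $2$.

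For this I would invoke two classical facts about $H^*(SO(n);\Z/2)$. Writing $e_j := \sigma w_{j+1} \in H^j(SO(n);\Z/2)$, the standard presentation
\[
H^*(SO(n);\Z/2) \cong \bigotimes_{\substack{i \text{ odd} \\ 1 \le i \le n-1}} \Z/2[e_i]/(e_i^{h_i})
\]
satisfies $e_{i \cdot 2^k} = e_i^{2^k}$. In particular $e_{2^k} = e_1^{2^k}$ lies in the principal ideal $(e_1)$, while for $j$ not a power of $2$ the element $e_j$ is a power of some odd-indexed generator $e_i$ with $i > 1$ and hence lies outside $(e_1)$. The identity $e_{2^k} = e_1^{2^k}$ follows by induction from the Wu-formula computation $Sq^{2^{k-1}} w_{2^{k-1}+1} \equiv w_{2^k+1}$ modulo decomposables in $H^*(BSO(n);\Z/2)$, together with the vanishing of suspensions of products. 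The second fact is that, since $\bar{q}_{SO(n)}$ is the nontrivial double cover classified by $e_1 \in H^1(SO(n);\Z/2)$, the kernel of $\bar{q}_{SO(n)}^*$ equals the ideal $(e_1)$. Combining, $\bar{q}_{SO(n)}^*(e_{i-1}) = 0$ iff $i-1$ is a power of $2$, i.e., $i = 2^k + 1$, completing the argument.
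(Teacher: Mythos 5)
Your proposal is correct, and for part (1) it matches the paper exactly: both simply attribute the presentation of $H^*(BSpin(n);\Z/2)$ to Quillen (your sketch of his Serre spectral sequence argument for $\R P^\infty\to BSpin(n)\to BSO(n)$ with Kudo transgression is consistent with that citation). For part (2) the skeleton is also the same --- both arguments rest on the naturality square relating $j_{Spin(n)}$ and $j_{SO(n)}$ via $\bar{q}_{SO(n)}$, so that $j_{Spin(n)}^*(u_i)=(\Sigma\bar{q}_{SO(n)})^*j_{SO(n)}^*(w_i)$ --- but you diverge at the key non-vanishing step. The paper quotes \cite{IKT} for the fact that $(\Sigma\bar{q}_{SO(n)})^*(j')^*(w_i)\ne 0$ when $i\ne 2^k+1$, whereas you reprove it from scratch: you identify $j_{SO(n)}^*(w_i)$ with the cohomology suspension $e_{i-1}=\sigma w_i$, show $\ker\bar{q}_{SO(n)}^*=(e_1)$ via the Gysin sequence of the double cover classified by $e_1$, and then use the truncated polynomial presentation of $H^*(SO(n);\Z/2)$ together with $e_j^2=e_{2j}$ (from $Sq^jw_{j+1}\equiv w_{2j+1}$ mod decomposables and compatibility of Steenrod squares with suspension) to see that $e_{i-1}$ lies in $(e_1)$ exactly when $i-1$ is a power of $2$. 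All of these steps check out, including the fact that $e_{i-1}\ne 0$ for $i-1\le n-1$, so the monomial $e_m^{2^k}$ with $m$ odd, $m>1$, genuinely survives. What your route buys is self-containedness --- it replaces the external citation with an elementary and transparent computation --- at the cost of a little more length; it also makes visible exactly why the excluded degrees $i=2^k+1$ are excluded, which the paper leaves implicit in the reference.
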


  \begin{proof}
    (1) is a result of Quillen \cite{Q}. We prove the statement (2). It is well known that $(j')^*(w_i)\ne 0$ for $i=2,3,\ldots,n$, where $j'\colon\Sigma SO(n)\to BSO(n)$ is the natural map. On the other hand, it is shown in \cite{IKT} that $(\Sigma\bar{q}_{SO(n)})^*\circ(j')^*(w_i)\ne 0$. Then for $2\le i\le n$ with $i\ne 2^k+1$,
    \[
      0\ne(\Sigma\bar{q}_{SO(n)})^*\circ(j')^*(w_i)=j^*\circ\bar{q}_{SO(n)}(w_i)=j^*(u_i).
    \]
    Thus the statement (2) is proved.
  \end{proof}

  The following lemma is easily deduced from the formula \eqref{BSO(n)}.

  \begin{lemma}
    \label{Sq w}
    In $H^*(BSO(n);\Z/2)$, we have:
    \begin{enumerate}
      \item If $n\equiv 0,1\mod 4$, then $Sq^2w_i$ for $i=n-3,n-1$ are decomposable and $Sq^2w_{n-1}$ includes the term $w_2w_{n-1}$;

      \item if $n\equiv 2\mod 8$, then $Sq^5w_i$ for $i=n-4,n-9$ are decomposable  and $Sq^5w_{n-4}$ includes the term $w_2w_{n-1}$;

      \item if $n\equiv 6\mod 8$, then $Sq^3w_i$ for $i=n-2,n-4$ are decomposable  and $Sq^3w_{n-2}$ includes the term $w_2w_{n-1}$;

      \item if $n\equiv 3\mod 4$, then $Sq^2w_i$ for $i=n-2,n$ are decomposable and $Sq^2w_n$ includes the term $w_2w_n$.
    \end{enumerate}
  \end{lemma}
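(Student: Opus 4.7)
The statement is purely computational: each item asserts that a specific $Sq^i w_j$ is decomposable (that is, has no indecomposable $w_m$ summand after applying the Wu formula) and, in some cases, identifies the coefficient of a specific decomposable monomial. My plan is to expand each $Sq^i w_j$ via the Wu formula
\[
  Sq^iw_j=\sum_{k=0}^i\binom{j+k-i-1}{k}w_{i-k}w_{j+k}
\]
from \eqref{BSO(n)} and read off the binomial coefficients modulo $2$, using that $w_m=0$ for $m>n$ and $w_0=1$.

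Concretely, for each case $(1)$--$(4)$ I would do the following. First, I would write $Sq^iw_j$ as an explicit sum of at most $i+1$ terms. The only summand that can fail to be decomposable is the one of the form $\binom{j+i-i-1}{i}w_0 w_{j+i} = \binom{j-1}{i}w_{j+i}$ when $j+i\le n$, so decomposability reduces to checking a single binomial coefficient mod $2$ (or observing $j+i>n$, which is the mechanism in case $(4)$ for $Sq^2w_n$). Then, to identify the claimed decomposable term, I would locate the summand containing $w_2$ or, more generally, the one giving $w_2 w_{n-1}$ (resp.\ $w_2 w_n$) and again verify the coefficient is $1$ mod $2$ using the residue of $n$ modulo $4$ or $8$. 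For example, in case $(1)$ the relevant checks are $\binom{n-4}{2}\equiv 0\pmod 2$ when $n\equiv 0,1\pmod 4$ (handling $Sq^2w_{n-3}$) and the fact that $w_{n+1}=w_{n+2}=0$ forces $Sq^2w_{n-1}=w_2w_{n-1}+(n-3)w_1w_n$, which is manifestly decomposable and visibly contains $w_2w_{n-1}$.

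Cases $(2)$ and $(3)$ are the same kind of exercise but with $Sq^5$ and $Sq^3$, and the residue conditions $n\equiv 2\pmod 8$ and $n\equiv 6\pmod 8$ are precisely what is needed to make $\binom{j-1}{i}\equiv 0\pmod 2$ for the lone possibly-indecomposable summand, while simultaneously making $\binom{j+k-i-1}{k}\equiv 1\pmod 2$ for the index $k$ that produces $w_2 w_{n-1}$. Case $(4)$ is the easiest since $j+i>n$ for $Sq^2w_n$ kills all summands except $w_2w_n$, and $Sq^2w_{n-2}$ again involves only $\binom{n-3}{2}\pmod 2$.

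The main (and really only) obstacle is bookkeeping: for each item one must track two things simultaneously---the parity of $\binom{j-1}{i}$ to obtain decomposability, and the parity of the single binomial coefficient producing the named monomial. Both reductions are routine applications of Lucas' theorem on binomial coefficients mod $2$, and the residue hypotheses on $n$ in the four cases are exactly calibrated so that these parity conditions come out correctly. No deeper structural input is required; in particular one does not need the Bockstein information or the secondary operations that appear elsewhere in Section \ref{Samelson products in a Lie group}.
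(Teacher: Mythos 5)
Your proposal is correct and is exactly the argument the paper intends: the lemma is stated in the paper without a written proof, accompanied only by the remark that it is ``easily deduced from the formula \eqref{BSO(n)}'', i.e.\ by expanding the Wu formula and checking binomial coefficients mod $2$ (plus the vanishing $w_m=0$ for $m>n$), which is precisely what you do. All of your parity checks ($\binom{n-4}{2}$, $\binom{n-3}{2}$, $\binom{n-5}{3}$, $\binom{n-10}{5}$ even; $n-5$, $\binom{n-7}{3}$ odd in the relevant congruence classes) come out right, so there is nothing to add.
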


  Let $C_n$ denote the center of $Spin(n)$. Then we have:

  \begin{enumerate}
    \item $C_{2n+1}\cong\Z/2$ and $Spin(2n+1)/C_{2n+1}\cong SO(2n+1)$.

    \item $C_{4n+2}\cong\Z/4$ and $Spin(4n+2)/(\Z/2)\cong SO(4n+2)$.

    \item $C_{4n}\cong\Z/2\times\Z/2$, $Spin(4n)/(\Z/2\times 1)\cong SO(4n)$ and $Spin(4n)/(1\times\Z/2)\cong Ss(4n)$.
  \end{enumerate}

  \begin{proposition}
    \label{SO(n)}
    If $H=Spin(n)$ and $K=SO(n)$, then $\langle\epsilon,p\rangle$ is of order $s(G)$.
  \end{proposition}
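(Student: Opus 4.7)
Since $K = SO(n)$ has $\pi_1(SO(n)) \cong \Z/2$, the identification $\pi_1(K) \cong p_2(C)$ from the proof of Lemma \ref{p_2(C)} gives $s(G) = 2$. Lemma \ref{upper bound} then bounds the order of $\langle \epsilon, p \rangle$ by $2$, so it suffices to prove that $\langle \epsilon, p \rangle$ is non-trivial; by Lemma \ref{lower bound} it is enough to show that the Samelson product $\langle \bar{\epsilon}, \bar{q} \rangle$ with $\bar{q} \colon Spin(n) \to SO(n)$ is non-trivial. The plan is to verify this by applying the cohomological criterion Lemma \ref{Steenrod operation} at $p = 2$, taking $y = w_2$ throughout and choosing the remaining data $(x, z, \theta)$ from the Stiefel--Whitney classes of $BSO(n)$ according to the congruence class of $n \pmod 8$.

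Guided by Lemma \ref{Sq w}, we will set $(x, z, \theta) = (w_{n-1}, w_{n-1}, Sq^2)$ when $n \equiv 0, 1 \pmod 4$; $(w_n, w_n, Sq^2)$ when $n \equiv 3 \pmod 4$; $(w_{n-4}, w_{n-1}, Sq^5)$ when $n \equiv 2 \pmod 8$; and $(w_{n-2}, w_{n-1}, Sq^3)$ when $n \equiv 6 \pmod 8$. In each case Lemma \ref{Sq w} guarantees that $\theta(x)$ is decomposable and contains the term $w_2 \cdot z$; combined with the fact that $QH^m(BSO(n); \Z/2)$ is one-dimensional, spanned by $w_m$, this establishes conditions (1) and (2) of Lemma \ref{Steenrod operation}. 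Moreover, Lemma \ref{Spin(n) cohomology}(2) yields $(\bar{q} \circ j)^*(z) = j^*(u_{|z|}) \neq 0$ whenever $|z| \neq 2^k + 1$, and a short arithmetic check in each congruence class rules this exception out for $n \geq 7$.

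The main obstacle will be to verify the remaining half of condition (3), namely that $j^*(u_{|z|})$ is not included as a summand of $\theta(v)$ for any $v \in H^{|z|-|\theta|}(\Sigma Spin(n); \Z/2)$. The plan is to transfer the question, via the suspension isomorphism, to whether the Steenrod operation $\theta \colon H^{|z|-1-|\theta|}(Spin(n); \Z/2) \to H^{|z|-1}(Spin(n); \Z/2)$ can produce $j^*(u_{|z|})$ as a summand, and then to exploit the Quillen-type description of $H^*(BSpin(n); \Z/2)$ in Lemma \ref{Spin(n) cohomology}(1)---together with the identification of $j^*(u_{|z|})$ as an indecomposable primitive generator of $H^*(Spin(n); \Z/2)$---to rule this out by inspecting the action of $Sq^i$ on the generators in the relevant lower degree, where Wu's formula and the fibration $\bar{q}$ pin down the image precisely. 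This last bookkeeping is what remains case-by-case; once it is carried out, Lemma \ref{Steenrod operation} delivers non-triviality of $\langle \bar{\epsilon}, \bar{q} \rangle$, and hence the proposition.
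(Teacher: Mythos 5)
Your strategy is the same as the paper's: reduce to showing $\langle\bar{\epsilon},\bar{q}\rangle\ne 0$ via Lemmas \ref{upper bound} and \ref{lower bound}, then apply Lemma \ref{Steenrod operation} with $y=w_2$ and with $(x,z,\theta)$ read off from Lemma \ref{Sq w} according to $n$ modulo $8$, using Lemma \ref{Spin(n) cohomology} for the first half of condition (3). (The paper only writes out the odd case, with $x=z=w_{n-1}$ or $w_n$ and $\theta=Sq^2$, and asserts the even case is similar.) Two remarks on completeness. First, the deferred ``bookkeeping'' for the second half of condition (3) is genuine content, but it is exactly what the unused decomposability statements in Lemma \ref{Sq w} (for $i=n-3$, $n-9$, $n-4$, $n-2$) are designed to supply: since cup products vanish in $\widetilde{H}^*(\Sigma Spin(n);\Z/2)$ and $\theta$ commutes with $j^*$, only $\theta$ applied to the classes $j^*(u_i)$ in the relevant degree can contribute, and those values are $j^*$ of decomposables, hence zero. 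Deferring this is comparable to the paper's own level of detail, so I would not count it as a gap by itself.

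The concrete gap is your claim that ``a short arithmetic check in each congruence class rules out'' the exception $|z|=2^k+1$ for $n\ge 7$. This is false in the case $n\equiv 2\pmod 8$: there $z=w_{n-1}$, and $n-1=2^k+1$ happens precisely when $n=2^k+2$ with $k\ge 3$, i.e.\ $n=10,18,34,66,\dots$, all of which satisfy $n\equiv 2\pmod 8$ and do occur in the proposition (e.g.\ $Spin(10)\to SO(10)$, which is also needed for Corollary \ref{PO(4n+2)}). For such $n$ the failure is not merely that Lemma \ref{Spin(n) cohomology}(2) is silent: Quillen's relations force $u_{2^k+1}\equiv Sq^{2^{k-1}}\cdots Sq^1u_2\equiv 0$ modulo decomposables in $H^*(BSpin(n);\Z/2)$ (for $n=10$ one computes $u_9=0$ outright), so $(\bar{q}\circ j)^*(w_{n-1})=j^*(u_{n-1})=0$ and condition (3) of Lemma \ref{Steenrod operation} genuinely fails for the data $(w_{n-4},w_{n-1},Sq^5)$. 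You need a different choice of $(x,z,\theta)$, or an entirely different argument, for $n=2^k+2$; naive substitutes such as $x=z=w_6$, $\theta=Sq^2$ also fail (there $j^*(u_6)=Sq^2 j^*(u_4)$ up to the identifications, violating the second half of condition (3)), so this case requires real additional work rather than a routine check.
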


  \begin{proof}
    We only give a proof for $n$ odd because the case $n$ even is quite similarly proved. We apply Lemma \ref{Steenrod operation} by setting $x=z=w_{n-1},\,y=w_2$ and $\theta=Sq^2$. By Lemma \ref{Sq w}, the first and the second conditions of Lemma \ref{Steenrod operation} are satisfied. By Lemmas \ref{Spin(n) cohomology} and \ref{Sq w}, $(\bar{q}\circ j)^*(w_{n-1})$ is non-trivial and not included in any element of $Sq^2(H^*(\Sigma Spin(n);\Z/2))$. Then the third condition of Lemma \ref{Steenrod operation} is also satisfied, so $\langle\bar{\epsilon},\bar{q}\rangle\ne 0$. Thus since $s(G)=2$, Lemmas \ref{upper bound} and \ref{lower bound} complete the proof.
  \end{proof}

  Let $PO(n)=Spin(n)/C_n$. Then we have:

  \begin{corollary}
    \label{PO(4n+2)}
    If $H=Spin(4n+2)$ and $K=PO(4n+2)$, then $\langle\epsilon,p\rangle$ is of order $s(G)$.
  \end{corollary}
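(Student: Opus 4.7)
The plan is to reduce the corollary to Proposition \ref{SO(n)} by naturality along the two-fold covering $\phi\colon SO(4n+2)\to PO(4n+2)$ coming from the inclusion $\Z/2\hookrightarrow\Z/4=C_{4n+2}$. Here $s(G)=4$, so Lemma \ref{upper bound} says the order of $\langle\epsilon,p\rangle$ divides $4$, and Lemma \ref{lower bound} says it is a multiple of the order of $\langle\bar\epsilon,\bar q\rangle=\langle\bar\epsilon_{PO},\bar q_{PO}\rangle$. Hence it is enough to show the latter order equals $4$.

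Since $\phi$ is a covering with discrete fibre, $\Omega\phi$ is a homotopy equivalence, and therefore
\[
  \phi_*\colon[\Sigma Spin(4n+2),\,SO(4n+2)]\longrightarrow[\Sigma Spin(4n+2),\,PO(4n+2)]
\]
is a bijection. On $\pi_1$, $\phi_*$ realises the inclusion $\Z/2\hookrightarrow\Z/4$, so $\phi_*(\bar\epsilon_{SO})=2\bar\epsilon_{PO}$, while $\phi\circ\bar q_{SO}=\bar q_{PO}$ since all three maps are canonical quotients out of $Spin(4n+2)$. The proof of Proposition \ref{SO(n)} applied in dimension $4n+2$ (using the appropriate even-dimensional part of Lemma \ref{Sq w}) establishes through Lemma \ref{Steenrod operation} that $\langle\bar\epsilon_{SO},\bar q_{SO}\rangle$ is itself non-trivial, not merely $\langle\epsilon,p\rangle$ on the $SO$-side.

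Now naturality and bilinearity of Samelson products give
\[
  2\langle\bar\epsilon_{PO},\bar q_{PO}\rangle=\langle 2\bar\epsilon_{PO},\bar q_{PO}\rangle=\phi_*\langle\bar\epsilon_{SO},\bar q_{SO}\rangle\ne 0,
\]
while $|\bar\epsilon_{PO}|=4$ and bilinearity force $4\langle\bar\epsilon_{PO},\bar q_{PO}\rangle=0$. Together these pin the order of $\langle\bar\epsilon_{PO},\bar q_{PO}\rangle$ down to exactly $4$, and Lemmas \ref{upper bound} and \ref{lower bound} then complete the proof.

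The main obstacle I anticipate is not computational but bookkeeping: one must verify that the generators $\bar\epsilon_{SO}$ and $\bar\epsilon_{PO}$ supplied by Lemma \ref{p_2(C)} for the two choices of $K$ are compatible with $\phi_*$ as stated, and that the proof of Proposition \ref{SO(n)} in dimension $4n+2$ genuinely outputs the non-triviality of $\langle\bar\epsilon_{SO},\bar q_{SO}\rangle$ itself, not only of $\langle\epsilon,p\rangle$. Once those identifications are pinned down, the remainder is a short naturality chase.
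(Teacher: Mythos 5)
Your argument is correct and is essentially the paper's own proof: compare along the covering $SO(4n+2)\to PO(4n+2)$, use that $\langle\bar{\epsilon}_{SO},\bar{q}_{SO}\rangle\ne 0$ from (the even case of) Proposition \ref{SO(n)}, deduce $2\langle\bar{\epsilon}_{PO},\bar{q}_{PO}\rangle\ne 0$ by naturality and bilinearity, and conclude the order is $4=s(G)$ via Lemmas \ref{upper bound} and \ref{lower bound}. The only cosmetic difference is your justification that $\phi_*$ is a bijection --- $\Omega\phi$ is not surjective on path components, so one should instead argue (as the paper does) from the simple-connectivity of $\Sigma Spin(4n+2)$, though injectivity is all you actually use.
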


  \begin{proof}
    Let $\bar{\rho}\colon SO(4n+2)\to PO(4n+2)$ denote the projection. Then $\bar{\rho}_*(\bar{\epsilon}_{SO(4n+2)})=2\bar{\epsilon}_{PO(4n+2)}$. Since $S^1\wedge Spin(4n+2)$ is simply-connected, the map
    \[
      \bar{\rho}_*\colon[S^1\wedge Spin(4n+2),SO(4n+2)]\to[S^1\wedge Spin(4n+2),PO(4n+2)]
    \]
    is an isomorphism. By definition, $\bar{q}_{PO(4n+2)}=\bar{\rho}\circ\bar{q}_{SO(4n+2)}$. So by Proposition \ref{SO(n)},
    \[
      2\langle\bar{\epsilon}_{PO(4n+2)},\bar{q}_{PO(4n+2)}\rangle=\bar{\rho}_*(\langle\bar{\epsilon}_{SO(4n+2)},\bar{q}_{SO(4n+2)}\rangle)\ne 0.
    \]
    Then by Lemma \ref{upper bound}, the order of $\langle\bar{\epsilon}_{PO(4n+2)},\bar{q}_{PO(4n+2)}\rangle$ is a non-zero multiple of $s(G)=4$. Thus the proof is complete by Lemmas \ref{upper bound} and \ref{lower bound}.
  \end{proof}

  Let $\Delta$ denote the diagonal subgroup of $\Z/2\times\Z/2$.

  \begin{proposition}
    \label{Ss(4n)}
    If $H=Spin(4n)$ and $p_2(C)=1\times\Z/2,\Delta$, then $\langle\epsilon,p\rangle$ is of order $s(G)$.
  \end{proposition}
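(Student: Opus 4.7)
The plan is to first reduce the case $p_2(C)=\Delta$ to the case $p_2(C)=1\times\Z/2$ using an outer automorphism of $Spin(4n)$, and then to prove the latter case by applying Lemma \ref{Steenrod operation} to $K=Ss(4n)$. Throughout, $s(G)=|p_2(C)|=2$, so Lemma \ref{upper bound} bounds the order of $\langle\epsilon,p\rangle$ above by $2$, and by Lemma \ref{lower bound} it suffices to show $\langle\bar{\epsilon},\bar{q}\rangle\ne 0$.

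For the reduction, $Spin(4n)$ admits an automorphism $\sigma$ which swaps the two half-spin representations (arising from the $\Z/2$ diagram automorphism of $D_{2n}$ for $n\ne 2$, and from a suitable order-$2$ element of the triality $S_3$ for $n=2$). Equivalently, $\sigma$ fixes $\Z/2\times 1=\ker(Spin(4n)\to SO(4n))$ and interchanges $1\times\Z/2$ with $\Delta$. Hence $\sigma$ descends to an isomorphism $\tau\colon Ss(4n)\xrightarrow{\cong}Spin(4n)/\Delta$ with $\tau\circ\bar{q}_{Ss(4n)}=\bar{q}_\Delta\circ\sigma$, and naturality of the Samelson product yields
\[
\tau_*\langle\bar{\epsilon}_{Ss(4n)},\bar{q}_{Ss(4n)}\rangle=\langle\bar{\epsilon}_\Delta,\bar{q}_\Delta\circ\sigma\rangle=(1\wedge\sigma)^*\langle\bar{\epsilon}_\Delta,\bar{q}_\Delta\rangle.
\]
Since $\tau_*$ and $(1\wedge\sigma)^*$ are bijections, the two Samelson products vanish simultaneously, and it suffices to prove the case $K=Ss(4n)$.

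For $K=Ss(4n)$, the plan is to apply Lemma \ref{Steenrod operation} with $\theta=Sq^2$ and with $y$ the generator of $H^2(BSs(4n);\Z/2)\cong\Z/2$ (the obstruction to a Spin lift of $Ss(4n)$), mirroring the proof of Proposition \ref{SO(n)}. Using the Serre spectral sequence of the fibration $\R P^\infty\to BSpin(4n)\to BSs(4n)$ attached to the central extension $1\to\Z/2\to Spin(4n)\to Ss(4n)\to 1$, together with the description of $H^*(BSpin(4n);\Z/2)$ in Lemma \ref{Spin(n) cohomology}, I will compute $H^*(BSs(4n);\Z/2)$ in degrees at most $4n$ and produce a class $x\in H^{4n-1}(BSs(4n);\Z/2)$ such that: (i) $Sq^2 x$ is decomposable and contains the term $y\cdot x$, in analogy with Lemma \ref{Sq w}(1); (ii) $(\bar{q}_{Ss(4n)}\circ j)^*(x)\ne 0$ by Lemma \ref{Spin(n) cohomology}(2); and (iii) this pullback is not contained in any element of $Sq^2(H^*(\Sigma Spin(4n);\Z/2))$ for degree reasons. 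Lemma \ref{Steenrod operation} then forces $\langle\bar{\epsilon}_{Ss(4n)},\bar{q}_{Ss(4n)}\rangle\ne 0$, completing the proof.

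The principal obstacle is the explicit identification of the class $x$ and the verification of the Steenrod square formula (i): unlike $SO(4n)$, the group $Ss(4n)$ carries no Stiefel--Whitney classes as such, so $x$ must be produced indirectly through the spectral sequence and checked against the Steenrod algebra action on $H^*(BSpin(4n);\Z/2)$ prescribed by Quillen's theorem.
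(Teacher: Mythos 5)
Your reduction of the case $p_2(C)=\Delta$ to $p_2(C)=1\times\Z/2$ via the outer automorphism of $Spin(4n)$ interchanging the two half-spin representations is correct. The gap is in the main step, and it is the step you yourself flag as the ``principal obstacle.'' You propose to apply Lemma \ref{Steenrod operation} to $K=Ss(4n)$ with an indecomposable $x=z\in H^{4n-1}(BSs(4n);\Z/2)$ satisfying $Sq^2x\equiv yx$ modulo other decomposables, ``in analogy with Lemma \ref{Sq w}(1).'' But that analogy has no content yet: there is no map $BSs(4n)\to BSO(4n)$ (the vector representation does not factor through $Ss(4n)$), so $w_{4n-1}$ has no a priori counterpart in $H^*(BSs(4n);\Z/2)$. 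Producing such an $x$ means running the Serre spectral sequence of $\R P^\infty\to BSpin(4n)\to BSs(4n)$ out to degree $4n+1$ and controlling a Quillen-type presentation of $H^*(BSs(4n);\Z/2)$ there; this is a substantial computation that is neither standard nor sketched. Condition (1) of Lemma \ref{Steenrod operation} additionally requires $QH^{4n-1}(BSs(4n);\Z/2)$ to be spanned by $z$, another unverified structural claim in high degree. Finally, your condition (iii) cannot be settled ``for degree reasons'': in the $PSp(2n)$ case the source of $Sq^2$ was $\Sigma H^1(Sp(2n);\Z/2)=0$, whereas here $Sq^2$ maps out of $\Sigma H^{4n-4}(Spin(4n);\Z/2)\neq 0$, so a genuine argument (as in the paper's use of Lemma \ref{Sq w} for the $SO(n)$ case) is required.

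The paper avoids all of this by changing both the target group and the level at which the computation is done. For $n>2$ it passes to the further quotient $PO(4n)$ and works with the \emph{group} cohomology $H^*(PO(4n);\Z/2)$, whose Hopf algebra structure is known by Baum--Browder; a direct expansion of the commutator map $\gamma$ shows that $\gamma^*(u_7)$ (for $n$ odd) or $\gamma^*(u_{11})$ (for $n$ even) contains the term $v\otimes u_6$ or $v\otimes u_{10}$, with $\rho^*(u_6)\neq 0$, resp.\ $\rho^*(u_{10})\neq 0$, by Lemma \ref{Spin(n) cohomology}(2). The detecting classes sit in fixed low degrees independent of $n$, so no knowledge of $H^*(BSs(4n);\Z/2)$ is needed; the case $n=2$ is handled separately by triality and Proposition \ref{SO(n)}. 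As written, your outline leaves the decisive cohomological input unestablished, and it is precisely the input the paper's argument is designed to circumvent.
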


  \begin{proof}
    By triality of $Spin(8)$, the case $H=Spin(8)$ is proved by Proposition \ref{SO(n)}. Then we assume $n>2$. The mod 2 cohomology of $PO(4n)$ was determined by Baum and Browder \cite{BB} such that
    \[
    H^*(PO(4n);\Z/2)=\Z/2[v]/(v^{2^r})\otimes\Delta(u_1,\ldots,\widehat{u_{2^r-1}},\ldots,u_{n-1}),\quad\bar{\rho}^*(u_i)=w_i
    \]
    where $4n=2^r(2m+1)$, $|v|=1$ and $|u_i|=i$. The elements $v$ and $u_1$ correspond respectively to generators of subgroups $1\times\Z/2$ and $\Z/2\times 1$ of $C_{4n}\cong\Z/2\times\Z/2$. The Hopf algebra structure of $H^*(PO(4n);\Z/2)$ was also determined such that
    \[
      \bar{\phi}(v)=0\quad\text{and}\quad\bar{\phi}(u_i)=\sum_{j=1}^{i-1}\binom{i}{j}u_j\otimes v^{i-j}
    \]
    where $\bar{\phi}$ is the reduced diagonal map. Let $\gamma\colon PO(4n)^2\to PO(4n)$ denote the commutator map. Since $\bar{\epsilon}(v)\ne 0$, it suffices to show $\gamma^*(x)$ includes the term $v\otimes y$ such that $\rho^*(y)\ne 0$, where $\rho\colon Spin(4n)\to PO(4n)$ denotes the projection. Let $\mu\colon PO(4n)^2\to PO(4n)$ and $\Delta\colon PO(4n)\to PO(4n)^2$ denote the multiplication and the diagonal map, respectively. Let $\iota\colon PO(4n)\to PO(4n)$ be a map given by $\iota(x)=x^{-1}$, and let $T\colon PO(4n)^2\to PO(4n)^2$ be the switching map. Then
    \[
      \gamma=\mu\circ(\mu\times\mu)\circ(1\times 1\times\iota\times\iota)\circ(1\times T\times 1)\circ(\Delta\times\Delta).
    \]
    Let $I_k=\widetilde{H}^*(PO(n)^k;\Z/2)$. Now we compute $\gamma^*(u_i)$:
    \begin{align*}
      u_i&\xmapsto{\mu^*}&&u_i\otimes 1+1\otimes u_i+iu_{i-1}\otimes v\mod I_2^3\\
      &\xmapsto{(\mu\times\mu)^*}&&i(u_{i-1}\otimes v\otimes 1\otimes 1+1\otimes 1\otimes u_{i-1}\otimes v+u_{i-1}\otimes 1\otimes 1\otimes v+1\otimes u_{i-1}\otimes v\otimes 1)\\
      &&&\mod I_1\otimes 1\otimes I_1\otimes 1+1\otimes I_1\otimes 1\otimes I_1+I_4^3\\
      &\xmapsto{(1\times 1\times\iota\times\iota)^*}&&i(u_{i-1}\otimes v\otimes 1\otimes 1+1\otimes 1\otimes u_{i-1}\otimes v-u_{i-1}\otimes 1\otimes 1\otimes v-1\otimes u_{i-1}\otimes v\otimes 1)\\
      &&&\mod I_1\otimes 1\otimes I_1\otimes 1+1\otimes I_1\otimes 1\otimes I_1+I_4^3\\
      &\xmapsto{(1\times T\times 1)^*}&&i(u_{i-1}\otimes 1\otimes v\otimes 1+1\otimes u_{i-1}\otimes 1\otimes v-u_{i-1}\otimes 1\otimes 1\otimes v-1\otimes v\otimes u_{i-1}\otimes 1)\\
      &&&\mod I_1\otimes I_1\otimes 1\otimes 1+1\otimes 1\otimes I_1\otimes I_1+I_4^3\\
      &\xmapsto{(\Delta\times\Delta)^*}&&i(u_{i-1}\otimes y-y\otimes u_{i-1})\mod I_1\otimes 1+1\otimes I_1+I_2^3.
    \end{align*}
    Then for $n$ odd, $\gamma^*(u_7)$ includes the term $v\otimes u_6$, where $\rho^*(u_6)\ne 0$ by Lemma \ref{Spin(n) cohomology}, and for $n$ even, $\gamma^*(u_{11})$ includes the term $v\otimes u_{10}$, where $\rho^*(u_{10})\ne 0$ by Lemma \ref{Spin(n) cohomology}. Thus the Samelson product $\langle\bar{\epsilon},\bar{q}\rangle$ is non-trivial, completing the proof by Lemmas \ref{upper bound} and \ref{lower bound} because $s(G)=2$.
  \end{proof}


  \section{Exceptional case}\label{Exceptional case}

  First, we consider the case $H=E_6$.

  \begin{proposition}
    \label{E_6}
    If $H=E_6$, then $\langle\epsilon,p\rangle$ is of order $s(G)$.
  \end{proposition}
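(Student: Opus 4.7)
Since $Z(E_6)\cong\Z/3$, we have $s(G)\in\{1,3\}$. When $s(G)=1$ the statement is vacuous by Lemma \ref{upper bound}, so we may assume $s(G)=3$; then $p_2(C)=Z(E_6)$ and $K=E_6/(\Z/3)$, the adjoint group $E_6^{\mathrm{ad}}$. By Lemmas \ref{upper bound} and \ref{lower bound} it suffices to show that $\langle\bar\epsilon,\bar q\rangle$ is nontrivial at the prime $p=3$.

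The plan is to invoke Lemma \ref{Steenrod operation} with $p=3$, paralleling the treatment of $Sp(2n)$ in Proposition \ref{Sp(n)} and of $Spin(n)$ in Proposition \ref{SO(n)}. The mod $3$ cohomology of $BE_6^{\mathrm{ad}}$ can be accessed through the Serre spectral sequence of the covering $BE_6\to BE_6^{\mathrm{ad}}\to B(\Z/3)$, using the known calculation of $H^*(BE_6;\Z/3)$ (Toda, Kono--Mimura); in low degrees it contains a degree $2$ indecomposable $y$ detecting $\pi_1(E_6^{\mathrm{ad}})\cong\Z/3$, its Bockstein $\beta y$ in degree $3$, and the classes pulled back from $H^*(BE_6;\Z/3)$, the lowest being an indecomposable of degree $4$ on which $\mathcal{P}^1$ acts nontrivially.

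Concretely, I would seek an indecomposable $x\in QH^*(BE_6^{\mathrm{ad}};\Z/3)$ and a Steenrod operation $\theta$ — the natural first candidate being $\theta=\mathcal{P}^1$, of degree $4$ — such that $\theta(x)$ lands in a degree $n$ where $QH^n(BE_6^{\mathrm{ad}};\Z/3)$ is one-dimensional, spanned by some generator $z$, and such that $\theta(x)$ is decomposable containing the cross-term $y\cdot z$ via the Wu-type formula for $\mathcal{P}^1$ on the Chern-like generators of $BE_6$ at the prime $3$. Nontriviality of $(\bar q\circ j)^*(z)$ is then read off from the transgression in the path-loop fibration of $BE_6$ together with the fact that the generators pulled back from $BE_6$ survive to $H^*(E_6;\Z/3)$ under $j_{E_6}^*$; that $(\bar q\circ j)^*(z)$ does not lie in $\theta\bigl(H^*(\Sigma E_6;\Z/3)\bigr)$ follows by degree considerations, since no element of $H^{|z|-4}(\Sigma E_6;\Z/3)$ can have $\mathcal{P}^1$-image equal to $(\bar q\circ j)^*(z)$.

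The main obstacle is the cohomological bookkeeping: pinning down the correct pair $(x,\theta)$, verifying the one-dimensionality of $QH^{|z|}(BE_6^{\mathrm{ad}};\Z/3)$ in the relevant degree, and checking that the cross-term $y\cdot z$ actually appears in $\theta(x)$ rather than being cancelled by other decomposables. Once these ingredients are assembled, Lemma \ref{Steenrod operation} gives $\langle\bar\epsilon,\bar q\rangle\neq 0$, and Lemmas \ref{upper bound} and \ref{lower bound} complete the proof.
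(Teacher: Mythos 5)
Your reduction to showing $\langle\bar\epsilon,\bar q\rangle\neq 0$ for $K=Ad(E_6)$ via Lemmas \ref{upper bound} and \ref{lower bound} is correct, but the core of your argument is not actually carried out. You propose to apply Lemma \ref{Steenrod operation} to $BAd(E_6)$ at $p=3$ with $\theta=\mathcal{P}^1$, yet you never exhibit the classes $x$, $y$, $z$, never verify that $QH^n(BAd(E_6);\Z/3)$ is one-dimensional in the relevant degree $n>2$, and never check that $\theta(x)$ contains the cross-term $y\otimes z$. You explicitly flag these verifications as ``the main obstacle'' and leave them open; but they \emph{are} the proof. The action of $\mathcal{P}^1$ on the low-degree generators of $H^*(BAd(E_6);\Z/3)$ is not a standard fact one can simply read off from $H^*(BE_6;\Z/3)$ --- note that for the analogous statement $Sq^2x_6=x_2x_6$ in $H^*(BAd(E_7);\Z/2)$ the paper needs an entire subsection constructing a map from $\Ker\Delta_4^-\times Spin(12)$ and comparing with $BSO(12)$. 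As written, your argument is a plan with the decisive computation missing, so it does not establish the proposition.

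The paper avoids the classifying space entirely. It uses Kono's determination of the Hopf algebra $H^*(Ad(E_6);\Z/3)=\Z/3[x_2,x_8]/(x_2^9,x_8^3)\otimes\Lambda(x_1,x_3,x_7,x_9,x_{11},x_{16})$, in particular that the reduced diagonal satisfies $\bar\phi(x_9)=x_8\otimes x_1+x_2\otimes x_7-x_2^3\otimes x_3+x_2^4\otimes x_1$ and that $\bar q^*(x_8)\neq 0$. Feeding this into the commutator-map computation from the proof of Proposition \ref{Ss(4n)} shows $\gamma^*(x_9)$ contains a term $x_1\otimes x_8$ detected by $\bar\epsilon$ and $\bar q$, whence $\langle\bar\epsilon,\bar q\rangle\neq 0$. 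If you want to salvage your route, you would need to supply the missing mod $3$ cohomology of $BAd(E_6)$ together with its $\mathcal{P}^1$-action; otherwise the Hopf-algebra argument on the group itself is the efficient path here.
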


  \begin{proof}
    Since the center of $E_6$ is isomorphic to $\Z/3$, we only need to consider the case $G=S^1\times_{\Z/3}E_6$. The mod 3 cohomology of $Ad(E_6)$, which is the quotient of $E_6$ by its center, was determined by Kono \cite{Ko} such that
    \[
      H^*(Ad(E_6);\Z/3)=\Z/3[x_2,x_8]/(x_2^9,x_8^3)\otimes\Lambda(x_1,x_3,x_7,x_9,x_{11},x_{16})
    \]
    such that
    \[
      \bar{\phi}(x_9)=x_8\otimes x_1+x_2\otimes x_7-x_2^3\otimes x_3+x_2^4\otimes x_1\quad\text{and}\quad\bar{q}^*(x_8)\ne 0
    \]
    where $|x_i|=i$. Then by the same computation as in the proof of Proposition \ref{Ss(4n)}, we can see that $\langle\bar{\epsilon},\bar{q}\rangle$ is non-trivial. Thus by Lemmas \ref{upper bound} and \ref{lower bound}, $\langle\epsilon,1_G\rangle$ is of order $s(G)=3$.
  \end{proof}

  Next, we consider the case $H=E_7$. Because the center of $E_7$ is isomorphic to $\Z/2$, we only need to consider the case $G=S^1\times_{\Z/2}E_7$. The Hopf algebra structure of $H^*(Ad(E_7);\Z/2)$ was determined by Ishitoya, Kono and Toda \cite{IKT}, from which we can see that the same computation as $Ad(E_6)$ does not apply to $Ad(E_7)$. So we apply Lemma \ref{Steenrod operation}. Kono and Mimura \cite{KM} showed that the mod 2 cohomology of $BAd(E_7)$ is generated by elements $x_i$ for $i\in\{2,3,6,7,10,11,18,19,34,35,64,66,67,96,112\}$, where $|x_i|=i$. We determine $Sq^2x_6$.

  Let $e_1,e_2,\ldots,e_n$ be the standard basis of $\R^n$. Elements of the spin group $Spin(n)$ are expressed by using $e_1,e_2,\ldots,e_n$. See \cite[Chapter 3]{A}. Recall from \cite[Proposition 4.2]{A} that there are two representations
  \[
    \Delta_{2n}^+,\,\Delta^-_{2n}\colon Spin(2n)\to SU(2^{n-1})
  \]
  such that $\Delta_n^+$ has weights $\frac{1}{2}(\pm x_1\pm x_2\pm\cdots\pm x_n)$ with even numbers of minus signs and $\Delta_n^-$ has weights $\frac{1}{2}(\pm x_1\pm x_2\pm\cdots\pm x_n)$ with odd numbers of minus signs.

  \begin{proposition}
    \label{Spin(4)}
    There is a natural isomorphism
    \[
      Spin(4)\cong\Ker\Delta_4^+\times\Ker\Delta_4^-.
    \]
  \end{proposition}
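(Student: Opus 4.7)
The plan is to show that the combined homomorphism $\Phi = \Delta_4^+\oplus\Delta_4^-\colon Spin(4)\to SU(2)\times SU(2)$ is an isomorphism of Lie groups. Once this is known, the preimages under $\Phi$ of the subgroups $SU(2)\times 1$ and $1\times SU(2)$ are $\Ker\Delta_4^-$ and $\Ker\Delta_4^+$ respectively, and the product decomposition $SU(2)\times SU(2) = (SU(2)\times 1)\times(1\times SU(2))$ pulls back via $\Phi$ to the asserted natural isomorphism $Spin(4)\cong\Ker\Delta_4^+\times\Ker\Delta_4^-$.

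First I would verify that $\Phi$ has surjective differential. From the given weight description, $\Delta_4^+$ has weights $\pm\tfrac{1}{2}(x_1+x_2)$ and $\Delta_4^-$ has weights $\pm\tfrac{1}{2}(x_1-x_2)$. Thus in $\mathfrak{spin}(4)_{\mathbb{C}}$, with Cartan dual to $x_1,x_2$ and roots $\pm(x_1\pm x_2)$, the root $x_1+x_2$ shifts weights only of $\Delta_4^+$ while $x_1-x_2$ shifts weights only of $\Delta_4^-$; on the Cartan, $d\Phi$ is injective because $\{x_1+x_2,x_1-x_2\}$ is a basis of the dual Cartan. Hence $d\Phi\colon\mathfrak{spin}(4)\to\mathfrak{su}(2)\oplus\mathfrak{su}(2)$ is an isomorphism of six-dimensional spaces, and in particular each $\Delta_4^{\pm}\colon Spin(4)\to SU(2)$ is a surjective homomorphism with $3$-dimensional kernel.

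For injectivity of $\Phi$, I would invoke the classical decomposition $C\ell^0(4)\cong\mathbb{H}\oplus\mathbb{H}$ induced by the central element $e_1e_2e_3e_4$ (which satisfies $(e_1e_2e_3e_4)^2=+1$): this realises $Spin(4)\subset C\ell^0(4)$ as $Sp(1)\times Sp(1)$ and identifies $\Delta_4^{\pm}$ with the two projections to $\mathbb{H}$ composed with the standard inclusion $Sp(1)\hookrightarrow SU(2)$. Hence $\Phi$ is injective as well, so is an isomorphism, and the asserted natural isomorphism follows.

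The main obstacle is to align $\Delta_4^{\pm}$ with the two $\mathbb{H}$-factor projections under the Clifford-algebra identification, which requires identifying the $\pm 1$ eigenspaces of $e_1e_2e_3e_4$ acting on the total spin module with the half-spin modules specified by the given weight lists. If this alignment proves cumbersome, an alternative is to compute $\Ker\Phi$ directly: since $d\Phi$ is an isomorphism, $\Ker\Phi$ is a discrete central subgroup of $Spin(4)$, hence contained in the center $\{1,-1,\pm e_1e_2e_3e_4\}\cong\Z/2\times\Z/2$, and the action of each non-identity central element on the weight spaces of $\Delta_4^+\oplus\Delta_4^-$ is seen to be non-trivial, so $\Ker\Phi=1$.
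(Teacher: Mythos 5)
Your proposal is correct and follows essentially the same route as the paper: the paper's proof simply cites the product decomposition $Spin(4)\cong SU(2)\times SU(2)$ under which $\Delta_4^\pm$ become the two projections, which is exactly the statement that your map $\Phi=(\Delta_4^+,\Delta_4^-)$ is an isomorphism. You merely supply the details (the weight/root computation for $d\Phi$ and the check that no non-trivial central element lies in $\Ker\Delta_4^+\cap\Ker\Delta_4^-$) that the paper treats as classical.
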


  \begin{proof}
    There is a product decomposition $ Spin(4)\cong SU(2)\times SU(2)$ such that $\Delta_4^\pm\colon Spin(4)\to SU(2)$ are identified with projections $ SU(2)\times SU(2)\to SU(2)$. Then the statement is proved.
  \end{proof}

  As in \cite[Theorem 6.1]{A}, there is a homomorphism
  \[
    \theta\colon Spin(16)\to E_8
  \]
  whose kernel is $\{1,e_1e_2\cdots e_{16}\}$. Let $\mu\colon Spin(4)\times Spin(12)\to Spin(16)$ denote the homomorphism covering the inclusion
  \[
    SO(4)\times SO(12)\to SO(16),\quad(A,B)\mapsto\begin{pmatrix}A&O\\O&B\end{pmatrix}.
  \]
  Define $\bar{\mu}=\theta\circ\mu\colon Spin(4)\times Spin(12)\to E_8$. Then
  \[
    \Ker\bar{\mu}=\{(1,1),\,(-1,-1),\,(e_1e_2e_3e_4,e_5e_6\cdots e_{16}),\,(-e_1e_2e_3e_4,-e_5e_6\cdots e_{16})\}.
  \]
  Recall from \cite[Chapter 8]{A} that $E_7$ is defined as the centralizer of $\bar{\mu}(\Ker\Delta_4^+\times 1)$ in $E_8$. Then by Lemma \ref{Spin(4)}, there is a homomorphism
  \[
    \hat{\mu}\colon\Ker\Delta_4^-\times Spin(12)\to E_7.
  \]
  Since $-e_1e_2e_3e_4\in\Ker\Delta_4^+$, $\bar{\mu}(-e_1e_2e_3e_4,1)$ commutes with every element of $E_7$ in $E_8$. Moreover, $\bar{\mu}(-e_1e_2e_3e_4,1)=\bar{\mu}(e_1e_2e_3e_4,-1)=\hat{\mu}(e_1e_2e_3e_4,-1)$, which belongs to $E_7$ and is not the unit of $E_7$. Then we obtain:

  \begin{proposition}
    \label{Z(E_7)}
    The center of $E_7$ is $\{1,\hat{\mu}(e_1e_2e_3e_4,-1)\}$.
  \end{proposition}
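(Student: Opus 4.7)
The plan is to combine the observations already established in the paragraph preceding Proposition~\ref{Z(E_7)} with the fact, quoted earlier in this section, that the center of $E_7$ is cyclic of order two. Since a cyclic group of order two is determined by any non-identity element, it is enough to exhibit a single non-trivial central element, and the natural candidate is $\hat{\mu}(e_1e_2e_3e_4,-1)$.

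Centrality will follow from the identity $\hat{\mu}(e_1e_2e_3e_4,-1)=\bar{\mu}(-e_1e_2e_3e_4,1)$ noted above. Indeed, $-e_1e_2e_3e_4\in\Ker\Delta_4^+$, so by the definition of $E_7$ as the centralizer of $\bar{\mu}(\Ker\Delta_4^+\times 1)$ in $E_8$, the element $\bar{\mu}(-e_1e_2e_3e_4,1)$ commutes with every element of $E_7$. Since the element itself already lies in $E_7$ through the image of $\hat{\mu}$, it is therefore central in $E_7$.

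For non-triviality I will appeal to the explicit description of $\Ker\bar{\mu}$ displayed above: its four elements are $(1,1)$, $(-1,-1)$, $(e_1e_2e_3e_4,e_5e_6\cdots e_{16})$, and $(-e_1e_2e_3e_4,-e_5e_6\cdots e_{16})$, and none of these equals $(-e_1e_2e_3e_4,1)$. Hence $\bar{\mu}(-e_1e_2e_3e_4,1)\ne 1$ in $E_8$, so $\hat{\mu}(e_1e_2e_3e_4,-1)$ is not the identity of $E_7$. Combined with $|Z(E_7)|=2$, this forces $Z(E_7)=\{1,\hat{\mu}(e_1e_2e_3e_4,-1)\}$, as claimed.

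No substantive obstacle arises; the entire argument is bookkeeping among the structures introduced in the surrounding paragraphs, and the only point that even merits explicit mention is the passage from ``commutes with every element of $E_7$'' to ``lies in the center of $E_7$'', which is immediate once one observes that the element in question actually belongs to $E_7$.
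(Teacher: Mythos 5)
Your argument is essentially identical to the paper's, which proves the proposition in the paragraph immediately preceding it: one shows $\hat{\mu}(e_1e_2e_3e_4,-1)=\bar{\mu}(-e_1e_2e_3e_4,1)$ is a non-identity element of $E_7$ commuting with all of $E_7$ (since $-e_1e_2e_3e_4\in\Ker\Delta_4^+$ and $E_7$ is the centralizer of $\bar{\mu}(\Ker\Delta_4^+\times 1)$), and then invokes $Z(E_7)\cong\Z/2$. Your only addition is to make the non-triviality explicit by checking $(-e_1e_2e_3e_4,1)\notin\Ker\bar{\mu}$ against the displayed list, which the paper merely asserts; this is a correct and welcome piece of bookkeeping.
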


  Let $L=(\Ker\Delta_4^-\times Spin(12))/\{(1,1),(e_1e_2e_3e_4,-1)\}$. Then by Proposition \ref{Z(E_7)}, there is a map
  \[
    \rho\colon L\to Ad(E_7)
  \]
  which is an isomorphism in the second mod 2 cohomology.

  \begin{lemma}
    \label{Sq E_7}
    In $H^*(BAd(E_7);\Z/2)$, $Sq^2x_6$ is decomposable and includes the term $x_2x_6$.
  \end{lemma}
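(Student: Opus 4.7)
The proof naturally splits into two parts: decomposability of $Sq^2 x_6$, and identification of the $x_2 x_6$ coefficient.

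Decomposability is immediate from Kono--Mimura: $H^*(BAd(E_7);\Z/2)$ has no algebra generator in degree $8$, so every degree-$8$ class is decomposable. The only degree-$8$ monomials in the generators $x_2,x_3,x_6,x_7$ of degree $\le 7$ are $x_2^4$, $x_2 x_3^2$, and $x_2 x_6$, so
\[
Sq^2 x_6 = \alpha\, x_2^4 + \beta\, x_2 x_3^2 + \gamma\, x_2 x_6
\]
for some $\alpha,\beta,\gamma\in\Z/2$, and the claim reduces to showing $\gamma=1$.

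To isolate $\gamma$, I would pull back along the homomorphism $\rho\colon L\to Ad(E_7)$ defined above, which is an isomorphism on $H^2(\cdot;\Z/2)$. Setting $v=\rho^*(x_2)$, $v$ generates $H^2(BL;\Z/2)$ and, using $Sq^1 x_2=x_3$, also $\rho^*(x_3)=Sq^1 v$. I would then exploit the quotient projection $q\colon L\to L/SU(2)\cong SO(12)$ arising from the decomposition $L=(SU(2)\times Spin(12))/\Delta$. Since both $L$ and $SO(12)$ have $\pi_1\cong \Z/2$ and the map $q$ is an isomorphism on $\pi_1$ (the generator is the projection of a path from $1$ to $-1$ in $Spin(12)$), we get $q^*(w_2)=v$ in $H^2(BL;\Z/2)$.

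The core technical step is the identification $\rho^*(x_6)\equiv q^*(w_6)\pmod{\text{decomposables}}$ in $H^6(BL;\Z/2)$. Granted this, the Wu formula $Sq^2 w_6=w_2 w_6+w_8$ in $H^*(BSO(12);\Z/2)$ (valid since $w_1=0$) and naturality yield
\[
\rho^*(Sq^2 x_6) = Sq^2 \rho^*(x_6) = v\cdot \rho^*(x_6) + q^*(w_8) + \text{(Cartan contribution from decomposables)}.
\]
Equating this with the expansion $\alpha v^4 + \beta v(Sq^1 v)^2 + \gamma\, v\cdot\rho^*(x_6)$, writing $v^4=q^*(w_2^4)$ and $v(Sq^1 v)^2=q^*(w_2 w_3^2)$, and invoking linear independence of $\{q^*(w_2^4),\,q^*(w_2 w_3^2),\,q^*(w_2 w_6),\,q^*(w_8)\}$ in $H^8(BL;\Z/2)$ pins down $\gamma=1$.

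The main obstacle is establishing both the identification $\rho^*(x_6)\equiv q^*(w_6)\pmod{\text{decomposables}}$ and the degree-$8$ linear independence. I would handle both by running the Serre spectral sequence of the central extension $B\Z/2\to B(SU(2)\times Spin(12))\to BL$ and using Kudo's theorem on the transgressions $\tau(\eta^{2^k})=Sq^{2^{k-1}}\cdots Sq^1 v$, in parallel with the spectral sequence of the fibration $BSU(2)\to BL\to BSO(12)$ (where $c_2\in H^4(BSU(2);\Z/2)$ is transgressive to some $d_5(c_2)\in H^5(BSO(12);\Z/2)$ controlling the kernel of $q^*$ in low degrees).
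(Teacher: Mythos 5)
Your overall strategy coincides with the paper's: restrict along $\rho\colon L\to Ad(E_7)$, identify $\rho^*(x_6)$ with $q_2^*(w_6)$ modulo decomposables, and transport the Wu formula for $w_6$ back to $x_6$. Two points, one minor and one serious.

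First, the Wu formula is misquoted: in $H^*(BSO(12);\Z/2)$ one has $Sq^2w_6=w_2w_6+\tbinom{4}{1}w_1w_7+\tbinom{5}{2}w_8=w_2w_6$, since $\tbinom{5}{2}=10$ is even; there is no $w_8$ term. As written, your final comparison would leave an unmatched $q^*(w_8)$ on the left-hand side (everything on the right-hand side lies in the ideal generated by $v=q^*(w_2)$), so the linear independence you invoke would produce a contradiction rather than $\gamma=1$. This is a computational slip that disappears once the formula is corrected.

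Second, and more importantly, the step you yourself flag as the main obstacle --- $\rho^*(x_6)\equiv q_2^*(w_6)$ modulo decomposables --- is the real content of the lemma, and the tools you propose cannot deliver it. The Serre spectral sequences of $B\Z/2\to B(SU(2)\times Spin(12))\to BL$ and of $BSU(2)\to BL\to BSO(12)$ compute $H^*(BL;\Z/2)$ and the map $q_2^*$, but they carry no information about the restriction map $\rho^*$ from $H^*(BAd(E_7);\Z/2)$: for that one must know how the degree-$6$ generator of $H^*(BAd(E_7);\Z/2)$ (equivalently, of $H^*(BE_7;\Z/2)$) restricts along the homomorphism $Spin(12)\to E_7$ furnished by $\hat{\mu}$. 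The paper imports exactly this from Kono--Mimura and Kono--Mimura--Shimada: the pullback of $x_6$ along $\bar{\mu}\circ(1\times\bar{q})$ contains the term $1\otimes u_6$, whence $\rho^*(x_6)+a\rho^*(x_2)^3+b\rho^*(x_3)^2=q_2^*(w_6)$ by degree reasons. Without this external input (or an independent computation of the map $H^*(BE_7;\Z/2)\to H^*(BSpin(12);\Z/2)$, a substantial calculation in its own right), $\rho^*(x_6)$ could a priori be decomposable, in which case the coefficient $\gamma$ is invisible to $\rho^*$ and the whole comparison yields no information.
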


  \begin{proof}
    By \cite{KM,KMS}, $(\bar{\mu}\circ(1\times\bar{q}))^*(x_6)$ includes the term $1\otimes u_6$, where $u_i$ is as in Lemma \ref{Spin(n) cohomology}. Note that the composition
    \[
      Spin(12)\to\Ker\Delta_4^-\times Spin(12)\to L\xrightarrow{q_2} SO(12)
    \]
    is the natural projection, where $q_2$ is the second projection. Then by degree reasons,
    \[
      \rho^*(x_6)+a\rho^*(x_2)^3+b\rho^*(x_3)^2=q_2^*(w_6)
    \]
    for some $a,b\in\Z/2$. On the other hand, $q_2^*\colon H^2(BSO(12);\Z/2)\to H^2(BL;\Z/2)$ is an isomorphism, implying $\rho^*(x_2)=q_2^*(w_2)$. Then since $Sq^2w_6=w_2w_6$ by \eqref{BSO(n)} and $Sq^2x_6$ is decomposable by degree reasons, $Sq^2x_6$ is decomposable and includes the term $x_2x_6$, as stated.
  \end{proof}

  We are ready to prove:

  \begin{proposition}
    \label{E_7}
    If $H=E_7$, then $\langle\epsilon,p\rangle$ is of order $s(G)$.
  \end{proposition}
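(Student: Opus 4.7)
The plan is to mirror the strategy used in Propositions \ref{Sp(n)} and \ref{SO(n)}. Since the center of $E_7$ is $\Z/2$, the only case to consider is $G=S^1\times_{\Z/2}E_7$, where $s(G)=2$. By Lemma \ref{upper bound} the order of $\langle\epsilon,p\rangle$ divides $2$, so it suffices to show non-triviality; by Lemma \ref{lower bound} this reduces further to proving $\langle\bar{\epsilon},\bar{q}\rangle\ne 0$ for $K=Ad(E_7)$.

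To this end I would invoke Lemma \ref{Steenrod operation} with $x=z=x_6$, $y=x_2$, and $\theta=Sq^2$. Condition (1) holds because $|x_2|=2$ and, from the Kono--Mimura generator list for $H^*(BAd(E_7);\Z/2)$, $x_6$ is the unique indecomposable of degree $6$. Condition (2) is precisely the content of Lemma \ref{Sq E_7}: $Sq^2x_6$ is decomposable and contains $x_2x_6$ as a summand. These two conditions are thus immediate from the preparatory work.

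Condition (3) is the main obstacle and has two sub-claims: that $(\bar{q}\circ j)^*(x_6)$ is non-zero in $H^*(\Sigma E_7;\Z/2)$, and that it is not a summand of any element of $Sq^2(H^*(\Sigma E_7;\Z/2))$. For non-vanishing I would use the identity $\rho^*(x_6)=q_2^*(w_6)+a\rho^*(x_2)^3+b\rho^*(x_3)^2$ from the proof of Lemma \ref{Sq E_7}: applying $j^*$ kills the decomposable terms, and the residual $q_2^*(w_6)$ is non-trivial after suspension by Lemma \ref{Spin(n) cohomology}(2) applied to $Spin(12)$; naturality of the factorization $Spin(12)\hookrightarrow L\xrightarrow{\rho} Ad(E_7)$ then propagates this non-triviality back to $(\bar{q}\circ j)^*(x_6)$. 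The harder sub-claim is the $Sq^2$-image condition: because $H^3(E_7;\Z/2)\ne 0$, the naive degree argument used in Proposition \ref{Sp(n)} is unavailable, so one has to check explicitly that no class in $H^4(\Sigma E_7;\Z/2)$ maps under $Sq^2$ to the summand of $(\bar{q}\circ j)^*(x_6)$ identified above. I would carry this out by again invoking the factorization through $Spin(12)$ and applying Lemma \ref{Sq w} together with Lemma \ref{Spin(n) cohomology}, showing that the $Sq^2$-source would need to come from a class that does not survive in the image of $\rho^*$. With condition (3) established, Lemma \ref{Steenrod operation} yields $\langle\bar{\epsilon},\bar{q}\rangle\ne 0$, and the proposition follows from Lemmas \ref{upper bound} and \ref{lower bound}.
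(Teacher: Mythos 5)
Your proposal follows the paper's strategy exactly: the same reduction via Lemmas \ref{upper bound} and \ref{lower bound} to showing $\langle\bar{\epsilon},\bar{q}\rangle\ne 0$ for $K=Ad(E_7)$, and the same application of Lemma \ref{Steenrod operation} with $x=z=x_6$, $y=x_2$, $\theta=Sq^2$, with conditions (1) and (2) supplied by Lemma \ref{Sq E_7}. Your derivation of the non-vanishing of $(\bar{q}\circ j)^*(x_6)$ by restricting along $Spin(12)\to L\to Ad(E_7)$ and invoking Lemma \ref{Spin(n) cohomology}(2) is a reasonable, more self-contained substitute for the paper's direct appeal to \cite{KMS}.

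The genuine gap is in the second half of condition (3), and you correctly single it out as the delicate point --- but the fix you sketch cannot succeed. The group $H^4(\Sigma E_7;\Z/2)\cong H^3(E_7;\Z/2)$ is one-dimensional, spanned by the suspension of the degree-$3$ generator $x_3$, i.e.\ by $j^*(y_4)$ where $y_4$ is the polynomial generator of $H^4(BE_7;\Z/2)$. This class restricts non-trivially to $\Sigma Spin(12)$ (it is detected on $\pi_3$), so it cannot be dismissed as ``not surviving in the image of $\rho^*$.'' Moreover, by \cite{KMS} one has $Sq^2y_4=y_6$ in $H^*(BE_7;\Z/2)$, and since $\bar{q}^*(x_6)$ is the generator $y_6$ of the one-dimensional group $H^6(BE_7;\Z/2)$, it follows that $(\bar{q}\circ j)^*(x_6)=j^*(y_6)=Sq^2(j^*(y_4))$. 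Thus $(\bar{q}\circ j)^*(x_6)$ lies in $Sq^2(H^4(\Sigma E_7;\Z/2))$, so condition (3) of Lemma \ref{Steenrod operation} is actually \emph{false} for these choices of $x,z,\theta$, and the explicit check you propose would reveal this rather than close the gap. (For what it is worth, the paper's own proof disposes of this point with the phrase ``by degree reasons,'' which collides with the same class $j^*(y_4)$; a watertight argument here needs either different choices of $x$, $z$ and $\theta$, or independent control of the cross-term $u\otimes j^*(y_4)$ in $\mu^*(x_6)$.) As it stands, the key step of your argument fails, so the non-triviality of $\langle\bar{\epsilon},\bar{q}\rangle$ for $K=Ad(E_7)$ is not established by this route.
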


  \begin{proof}
    As mentioned above, we only need to consider $G=S^1\times_{\Z/2}E_7$. We apply Lemma \ref{Steenrod operation} by setting $x=z=x_6,\,y=x_2$ and $\theta=Sq^2$. By Lemma \ref{Sq E_7}, the first and the second conditions of Lemma \ref{Steenrod operation} are satisfied. As in \cite{KMS}, $\bar{q}^*(x_6)$ is a generator of $H^6(BE_7;\Z/2)$ such that $(\bar{q}\circ j)^*(x_6)$ is non-trivial. Then by degree reasons, the third condition of Lemma \ref{Steenrod operation} is also satisfied, implying $\langle\bar{\epsilon},\bar{q}\rangle$ is non-trivial. Since $s(G)=2$, the proof is complete by Lemmas \ref{upper bound} and \ref{lower bound}.
  \end{proof}


  \section{Proofs of Theorems \ref{main 2} and \ref{Samelson product}}\label{Proofs}

  This section proves Theorems \ref{main 2} and \ref{Samelson product}. First, we prove Theorem \ref{Samelson product}.

  \begin{proof}
    [Proof of Theorem \ref{Samelson product}]
    Suppose $H\cong H_1\times\cdots\times H_k$, where each $H_i$ is a simple Lie group. Let $r_i\colon S^1\times H\to S^1\times H_i$ be the projection, and let $G_i=(S^1\times H_i)/(r_i(C))$  for $i=1,2,\ldots,k$. By definition, $s(G)$ is the least common multiple of $s(G_1),\ldots,s(G_k)$.

    Let $\bar{r}_i\colon G\to G_i$ and $\iota_i\colon S^1\times H_i\to S^1\times H$ denote the projection and the inclusion, respectively. Then $\bar{r}_i\circ\epsilon_G=\epsilon_{G_i}$ and $\bar{r}_i\circ p_G\circ\iota_i=p_{G_i}$, so that
    \[
      (1\wedge\iota_i)^*\circ(\bar{r}_i)_*(\langle\epsilon_G,p_G\rangle)=\langle\bar{r}_i\circ\epsilon_G,\bar{r}_i\circ p_G\circ\iota_i\rangle=\langle\epsilon_{G_i},p_{G_i}\rangle.
    \]
    Thus the order of $\langle\epsilon_G,p_G\rangle$ is a non-zero multiple of the order of $\langle\epsilon_{G_i},p_{G_i}\rangle$. So by Propositions \ref{SU(n)}, \ref{Sp(n)}, \ref{SO(n)}, \ref{E_6} and \ref{E_7}, the order of $\langle\epsilon_G,p_G\rangle$ is a non-zero multiple of $s(G_i)$ for $i=1,2,\ldots,k$, hence so is $\langle\epsilon_G,1_G\rangle$. Therefore by Lemma \ref{upper bound}, the proof is complete.
  \end{proof}

  Next, we prove Theorem \ref{main 2}.

  \begin{proof}
    [Proof of Theorem \ref{main 2}]
    We only prove the case $H=SU(n)^r$ because the case $H=SU(4n-2)^s\times Sp(2n-1)^t$ is proved analogously. The implication (1) $\Rightarrow$ (2) follows from Theorem \ref{main 1}. We prove the implication (2) $\Rightarrow$ (1). Let $\partial_k\colon G\to\map_*(S^2,BG;k)\simeq\Omega_0G$ be as in Section \ref{Gauge groups and Samelson products}, and let $q_i\colon H\to SU(n)$ be the projection onto the $i$-th $SU(n)$. Then by Lemma \ref{connecting map}, the proof of Proposition \ref{SU(n)} implies that the image of the map
    \[
      (\partial_k)_*\colon\pi_{2n-1}(G)\to\pi_{2n-1}(\Omega_0G)
    \]
    is isomorphic to $\prod_{i=1}^r\Z/\frac{n!}{(k,|q_i(C)|)}$, where $\pi_{2n-1}(\Omega_0G)\cong(\Z/n!)^r$. By \eqref{evaluation fibration}, there is an exact sequence
    \[
      0\to\prod_{i=1}^r\Z/\tfrac{n!}{(k,|q_i(C)|)}\to(\Z/n!)^r\to\pi_{2n-1}(B\G_k(S^2,G))\to\pi_{2n-1}(BG)\cong\pi_{2n-1}(BSU(n)^r)=0.
    \]
    Then since $\pi_{2n-1}(B\G_k(S^2,G))\cong\pi_{2n-2}(\G_k(S^2,G))$, $\pi_{2n-2}(\G_k(S^2,G))\cong\prod_{i=1}^r\Z/(k,|q_i(C)|)$. So if $\G_k(X,G)\simeq\G_l(X,G)$, then $\pi_{2n-2}(\G_k(S^2,G))\simeq\pi_{2n-2}(\G_l(S^2,G))$, implying
    \[
      (k,|q_1(C)|)\cdots(k,|q_r(C)|)=(l,|q_1(C)|)\cdots(l,|q_r(C)|).
    \]
    As in the proof of Theorem \ref{main 2}, $s(G)$ is the least common multiple of $|q_1(C)|,\ldots,|q_r(C)|$. Then it is easy to see that the above equality implies $(k,s(G))=(l,s(G))$, completing the proof.
  \end{proof}

\end{document}